\newtheorem{thm}{Theorem}[section]
\newtheorem{lem}{Lemma}[section]
\newtheorem{cor}{Corollary}[section]
\newtheorem{prop}{Proposition}[section]
\theoremstyle{remark}
\newtheorem{rmk}{Remark}[section]
\numberwithin{equation}{section}
\theoremstyle{definition}
\newcommand{\C}{\ensuremath{\mathbb{C}}}
\newcommand{\R}{\ensuremath{\mathbb{R}}}
\newcommand{\Z}{\ensuremath{\mathbb{Z}}}
\newcommand{\CP}{\ensuremath{\mathbb{CP}}}
\newcommand{\rS}{\ensuremath{\mathbb{S}}}
\newcommand{\na}{\nabla}
\newcommand{\pa}{\partial}
\newcommand{\vphi}{\varphi}
\newcommand{\tr}{\text{tr}}
\newcommand{\KN}{\mathbin{\bigcirc\mspace{-15mu}\wedge\mspace{3mu}}}
\begin{document}

\title[4D gradient Ricci solitons with (half) WPIC] {Four-dimensional gradient Ricci solitons with (half) nonnegative isotropic curvature}
\author[Huai-Dong Cao And Junming Xie]{Huai-Dong Cao And Junming Xie}

\address{Department of Mathematics, Lehigh University, Bethlehem, PA 18015}
\email{huc2@lehigh.edu}

\address{Department of Mathematics, Rutgers University, Piscataway, NJ 08854}
\email{junming.xie@rutgers.edu}

\begin{abstract}
	This is a sequel to our paper \cite{Cao-Xie}, in which we investigated the geometry of 4-dimensional gradient shrinking Ricci solitons with half positive (nonnegative) isotropic curvature. In this paper, we mainly focus on 4-dimensional gradient steady Ricci solitons with nonnegative isotropic curvature (WPIC) or half nonnegative isotropic curvature (half WPIC). In particular, for $4$D complete {\it ancient solutions} with WPIC, we are able to prove the 2-nonnegativity of the Ricci curvature and bound the curvature tensor $Rm$ by $|Rm| \leq R$. For 4D gradient steady solitons with WPIC, we obtain a classification result. We also give a partial classification of 4D gradient steady Ricci solitons with half WPIC. Moreover, we obtain a preliminary classification result for 4D complete gradient {\it expanding Ricci solitons} with WPIC. Finally, motivated by the recent work \cite{Li-Zhang:22}, we improve our earlier results in \cite{Cao-Xie} on 4D gradient {\it shrinking Ricci solitons} with half PIC or half WPIC, and also provide a characterization of complete gradient K\"ahler-Ricci shrinkers in complex dimension two among 4-dimensional gradient Ricci shrinkers.
\end{abstract}
\maketitle

\section{Introduction}

A complete Riemannian manifold $(M^n, g)$ is called a {\em gradient Ricci soliton} if there exists a smooth (potential) function $f$ on $M^n$ such that the Ricci tensor $Rc$ of the metric $g$ satisfies the equation
\begin{equation} \label{eq:Riccisoliton}
Rc+ \na^2f = \rho g,
\end{equation} 
for some constant $\rho \in R$. 
Here $\na^2 f$ denotes the Hessian of $f$. The Ricci soliton is shrinking, or steady, or expanding if $\rho >0$, or $\rho =0$, or $\rho <0$, respectively.
Ricci solitons, introduced by Hamilton \cite{Ha:88} in the late 1980s (see also \cite {Ha:93}), are self-similar solutions to Hamilton's Ricci flow and a natural extension of Einstein manifolds.  They often arise as singularity models in the Ricci flow, thus it is important to classify them or understand their geometry and topology. 

In this paper, following our very recent work \cite{Cao-Xie}, we shall mainly focus on 4-dimensional gradient steady Ricci solitons with nonnegative, or half nonnegative, isotropic curvature. As is well-known, compact steady (and expanding)  Ricci solitons must be Einstein. In dimension $n=2$, Hamilton \cite{Ha:88} discovered the first example of a complete noncompact gradient steady soliton $\Sigma^2$ on $\mathbb R^2$, called the {\it cigar soliton}, where the metric is given explicitly by
$$ ds^2=\frac{dx^2 +dy^2}{1+x^2+y^2},$$
with potential function $f=-\log (1+x^2+y^2)$.  
The cigar soliton $\Sigma^2$ has positive curvature and is asymptotic to a cylinder of finite
circumference at infinity.  Furthermore, Hamilton \cite{Ha:88, Ha:93} showed that the cigar soliton is the only
complete steady soliton on a 2-dimensional manifold with
bounded (scalar) curvature $R$ which assumes its maximum
at the origin. For $n\geq 3$,  in the late 1980s Bryant \cite{Bryant}  proved  that
there exists, up to scalings, a unique complete rotationally symmetric gradient steady Ricci
soliton on $\Bbb R^n$; see Chow et al \cite{Chow-etal1}
for a detailed description. The Bryant soliton has positive sectional curvature, linear curvature decay,
and volume growth on the order of $r^{(n+1)/2}$. Here, $r$ denotes the geodesic distance from the origin. 
On the other hand,  in the K\"ahler case,
the first author \cite{Cao:96} constructed a complete $U(m)$-invariant gradient steady K\"ahler-Ricci soliton on $\mathbb{C}^m$, for $m\geq 2$, with positive sectional curvature, linear curvature decay, and volume growth on the order of $r^m$.  Recently, for $n\ge 3$, Lai \cite{Lai1} has found a family of $\Z_2 \times O(n-1)$-invariant $n$-dimensional gradient steady solitons on $\R^n$ with positive curvature operator. More recently, Apostolov-Cifarelli \cite{AC:23} and Chan-Conlon-Lai \cite{CCL:24} have constructed new examples of gradient steady K\"ahler-Ricci soliton on $\mathbb{C}^m$, including a one-parameter family of toric gradient steady K\"ahler-Ricci solitons on $\mathbb{C}^2$ with positive sectional curvature. For other constructions or additional examples of gradient steady solitons, see, e.g., \cite{Ivey:94,  DW:11, Yang:12, BDGW:15, Apple:17, BH:17, CD:20, Butts:21, Schafer:23, Wink:21, Wink:23} and the references therein. We also like to point out that Munteanu
and Wang \cite{MW:11} proved any $n$-dimensional complete noncompact
gradient steady Ricci soliton either has one end or splits as the product of the real line $\R$ with a compact Ricci-flat manifold.

In dimension $n=3$, B.-L. Chen \cite{ChenBL:09} showed that complete ancient solutions to the Ricci flow, in particular 3D shrinking and steady solitons, must have nonnegative curvature $Rm\geq 0$. By Hamilton's strong maximum principle \cite{Ha:86}, it follows that a 3D complete gradient steady soliton either has positive curvature $Rm>0$, or is flat, or is isometric to the product $\Sigma\times \R$ of the cigar soliton with the real line. 
Concerning the classification of positively curved gradient steady solitons, Brendle \cite {Brendle:13} confirmed a conjecture of Perelman \cite{Perelman:03} that the Bryant soliton is the
only 3-dimensional complete noncompact $\kappa$-noncollapsed gradient steady
soliton with positive sectional curvature; see also the related work by Deng and Zhu \cite{DZ19, DZ20}. 

On the other hand, Hamilton had conjectured the existence of a one-parameter family of 3-dimensional complete collapsed gradient steady solitons with positive sectional curvature, called flying wing in the sense that each is asymptotic to a sector with angle $\alpha \in (0, \pi)$, that connects the Bryant soliton ($\alpha=0$) and the cigar $\times \R$ ($\alpha=\pi$); see, e.g., \cite{CHe18}. 
Very recently, in a series of three papers, Lai \cite{Lai1, Lai2, Lai3} has proved the existence of flying wings conjectured by Hamilton. In particular,  Lai \cite{Lai2} showed that every 3D complete noncompact collapsed positively curved gradient steady soliton admits an $S^1$-symmetry, as conjectured by Hamilton and the first author. Moreover, for each $\alpha \in (0, \pi)$, Lai \cite{Lai3} showed that there exists a flying wing which is asymptotic to a sector of angle $\alpha$. 

For $n\ge 4$, based on a result of Zhang \cite{Zhang2:09}, Cao-Chen \cite{CC09} and Catino-Mantegazza \cite{CM11} proved independently that any complete {\it locally conformally flat} gradient steady soliton is either flat or isometric to the Bryant soliton up to scalings.  Furthermore, in dimension $n=4$,  Chen and Wang \cite{CW} showed that half-conformal flatness implies locally conformal flatness; thus, by \cite{CC09, CM11}, they are  either flat or isometric to the Bryant soliton up to scalings. 
Kim \cite{Kim:17} showed that any $4$-dimensional complete noncompact gradient steady soliton with harmonic Weyl curvature is either Ricci-flat or isometric to the Bryant soliton; see also very recent higher dimensional extensions \cite{Li:21, Kim:23}. 
Also, Brendle \cite {Brendle:14} extended his work \cite {Brendle:13} to dimension $n\ge 4$ under the extra assumption that the steady soliton is asymptotically cylindrical; see also related recent works by Deng and Zhu \cite{DZ3:20, DZ20}. For some of the recent progress on 4D gradient steady solitons, see, e.g., \cite {BCDMZ:22, MMS:23} and the references therein. 

 Let $(M^4, g, f)$ be a $4$-dimensional gradient Ricci soliton. Throughout the paper, we shall assume $M^4$ is oriented. Then, the space of 2-forms $\wedge^2(M)$ admits the orthogonal decomposition  $$\wedge^2(M) = \wedge^{+}(M) \oplus \wedge^{-}(M) $$
into the eigenspaces of the Hodge star operator $\star : \wedge^2(M) \to \wedge^2(M)$  of eigenvalues $\pm 1$. Smooth sections of $\wedge^+ (M)$ and $\wedge^-(M)$ are called {\it self-dual} and {\it anti-self-dual} 2-forms, respectively.  Accordingly, the Riemann curvature operator $ Rm : \wedge^2(M) \to \wedge^2(M)$ admits a block decomposition into  four pieces, 
\begin{equation} \label{eq:Rmdecomp}
	Rm = 
	\begin{pmatrix}
		A & B \\
		{}^{t}B & C
	\end{pmatrix}
	=
	\begin{pmatrix}
		W^+ + \frac{R}{12}I & \mathring{Rc} \\
		\mathring{Rc} & W^- +\frac{R}{12}I
	\end{pmatrix},
\end{equation}
where $W^{\pm}$ denote the self-dual and anti-self-dual part of the Weyl tensor, $R$ the scalar curvature, and $\mathring{Rc}$ the traceless Ricci part.  It turns out that $(M^4, g)$ has {\it positive isotropic curvature} (PIC), introduced by Micallef and Moore \cite{Micallef-M:88}, if  the $3\times 3$ matrices
\[ A := W^{+} + \frac{R} {12}I \qquad \mbox{and} \qquad  C:=W^{-} +\frac{R} {12}I \] are both 2-positive definite. Similarly, $(M^4, g)$ has {\it nonnegative isotropic curvature}, or {\it weakly positive isotropic curvature} (WPIC), if A and C are both 2-nonnegative (i.e., weakly 2-positive). Finally, if either $A$ or  $C$ is 2-positive (or 2-nonnegative), then we call $(M^4, g)$ half PIC (or half WPIC). 

Hamilton \cite{Ha:97}  showed that the PIC (or half PIC) condition is preserved by the Ricci flow in dimension four\footnote{Later, this was proved in all dimensions $n\geq 5$ by Brendle-Schoen \cite{Brendle-S:08} and Nguyen \cite{Nguyen:10} independently, and this property played an essential role in Brendle-Schoen's proof of the long standing $1/4$-pinching differentiable sphere theorem \cite{Brendle-S:08}.} and successfully initiated the investigation of 4D  Ricci flow with surgery under the PIC assumption; see also the work of Chen-Zhu \cite{Chen-Zhu:06}. Subsequently,  by using the Ricci flow with surgery developed in \cite{Ha:97, Chen-Zhu:06}, Chen-Tang-Zhu \cite{CTZ:12}  classified compact $4$-manifolds with PIC up to diffeomorphisms.  

Meanwhile, $4$-dimensional gradient shrinking Ricci solitons with PIC or WPIC have been classified recently by Li, Ni and Wang \cite{Li-Ni-Wang:18}.  In our very recent work \cite{Cao-Xie}, we studied 4-dimensional gradient shrinking Ricci solitons with half PIC or half WPIC.  In this paper, we shall explore the geometry and classifications of 4-dimensional complete gradient steady (and expanding) Ricci solitons with WPIC or half WPIC.  

Our first result concerns the geometry of 4-dimensional complete ancient solutions with WPIC. 

\begin{thm} \label{thm:4Dancient}
 Let $(M^4, g(t)), -\infty <t\leq 0,$ be a 4-dimensional complete ancient solution to the
Ricci flow with nonnegative isotropic curvature. Then, $g(t)$ has 2-nonnegative Ricci curvature and  satisfies the curvature estimate $$|Rm| \leq R.$$
Furthermore, if $g(t)$ has positive isotropic curvature then it has 2-positive Ricci curvature.
\end{thm}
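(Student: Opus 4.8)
The plan is to prove both conclusions as a \emph{pinching} phenomenon that is genuinely dynamic, since neither $Rc\geq 0$ nor $|Rm|\leq R$ follows from WPIC pointwise: for the conformally flat algebraic curvature operator with $W=0$ and Ricci eigenvalues $(R,0,0,0)$ one has $A=C=\frac{R}{12}I$ (so WPIC holds) and $Rc\geq 0$, yet $|Rm|^2=|W|^2+2|Rc|^2-\tfrac13 R^2=\tfrac53 R^2$, i.e. $|Rm|=\sqrt{5/3}\,R>R$. The engine will therefore be a Hamilton--Ivey--type estimate whose pinching improves as the elapsed time grows, applied to the ancient solution by pushing the initial time to $-\infty$, in the spirit of B.-L. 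Chen's dimension-$3$ argument. As preliminaries I would first record that the WPIC cone --- defined by $2$-nonnegativity of $A=W^++\frac{R}{12}I$ and $C=W^-+\frac{R}{12}I$ --- is a closed convex $O(4)$-invariant cone that is invariant under Hamilton's curvature ODE $\frac{d}{dt}Rm=Rm^2+Rm^{\#}$, so that by Hamilton's maximum principle for systems (localized à la B.-L. Chen to avoid global curvature bounds in the noncompact case) WPIC is preserved for all $t\leq 0$; and that $\tr A=\tr C=R/4$ together with $2$-nonnegativity forces $R\geq 0$ throughout.

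The heart of the argument is the pinching step. Following the template of Hamilton--Ivey and the $4$-dimensional PIC pinching of Hamilton and of Chen--Zhu, I would construct a nested family of closed convex $O(4)$-invariant cones $\mathcal C_s$, each invariant under $Rm^2+Rm^{\#}$, interpolating between $\mathcal C_0=\{\text{WPIC}\}$ and the target $\mathcal C_\infty=\{Rc\geq 0\}\cap\{|Rm|\leq R\}$, with the property that the pinching improves along the flow: a solution in $\mathcal C_0$ at one time lies in $\mathcal C_{s(\tau)}$ after elapsed time $\tau$, where $s(\tau)\to\infty$ as $\tau\to\infty$, the improvement being carried by a logarithmic (hence \emph{not} scale-invariant) factor. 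Concretely this amounts to bounding the most negative Ricci eigenvalue and the deviation $|Rm|-R$ by $R$ times a factor decaying in $\tau$.

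Granting this estimate, the ancient hypothesis closes the argument. Fixing $t=0$ and applying the pinching over the interval $[-T,0]$ --- using at the initial time $-T$ only that the curvature lies in $\mathcal C_0$ --- the curvature at $t=0$ lies in $\mathcal C_{s(T)}$ for every $T>0$; letting $T\to\infty$ forces it into $\bigcap_s\mathcal C_s=\mathcal C_\infty$, so $Rc\geq 0$ and $|Rm|\leq R$ at $t=0$, and by time-translation at every time. For the final sharp assertion, PIC is an open $O(4)$-invariant condition also invariant under $Rm^2+Rm^{\#}$, hence likewise preserved; combining preserved PIC with the now-established $Rc\geq 0$, I would apply Hamilton's strong maximum principle to the Ricci tensor. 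If $Rc$ had a nontrivial kernel at some point, that kernel would be a parallel, flow-invariant distribution producing a local splitting off a flat $\R$ factor; but such a factor yields an isotropic $2$-plane of zero curvature, contradicting PIC. Hence $Rc>0$.

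I expect the main obstacle to be the pinching step itself: identifying the correct interpolating cones $\mathcal C_s$ adapted to $4$-dimensional WPIC and verifying their invariance under $Rm^2+Rm^{\#}$, whose $Rm^{\#}$ term couples the blocks $A$, $B=\trc$, $C$ and so must be checked carefully on the relevant boundary faces, together with establishing the genuine (logarithmic) time-improvement. A secondary technical point is making the maximum principle rigorous for a complete noncompact ancient solution without assuming a priori global curvature bounds.
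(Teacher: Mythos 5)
Your strategy is genuinely different from the paper's, and the part you defer --- constructing the nested ODE-invariant cones $\mathcal C_s$ and proving the time-improving pinching --- is exactly the part that is missing and doubtful. The paper uses no Hamilton--Ivey machinery at all. Its route is: (i) quote Lemma \ref{lem:Cao-Xie} (from \cite{Cao-Xie}) to get the \emph{full} nonnegativity $A\ge 0$ and $C\ge 0$ for ancient solutions with WPIC (this, not a quantitative pinching, is the only ``dynamic'' input beyond step (ii)); (ii) apply Chen's Lemma (Lemma \ref{Chen's lem}) to the single scalar $u=4\Lambda_1=R+4B_1-4(B_2+B_3)$, for which Hamilton's evolution equations give $(\pa_t-\Delta)u\ge \tfrac12 u^2+8(B_2-B_1)(A_3+C_3)+8(B_3-B_1)(A_2+C_2)+8(B_2+B_3)(A_1+C_1)\ge\tfrac12 u^2$, the last inequality using precisely $A\ge0$, $C\ge0$; ancientness then yields $u\ge0$, i.e.\ $Rc\ge0$; (iii) deduce $|Rm|\le R$ by pointwise algebra: $A,C\ge0$ give $|A|^2,|C|^2\le(\tr A)^2=R^2/16$, while $Rc\ge0$ gives $|Rc|^2\le R^2$, hence $4|B|^2\le\tfrac34R^2$ and $|Rm|^2\le 2(|A|^2+|C|^2+|B|^2)\le R^2$. (Note the estimate is stated in the paper's normalization of the curvature norm, so your introductory example with $|Rm|=\sqrt{5/3}\,R$ does not apply as stated; what genuinely fails pointwise is $A\ge0$, $C\ge0$ and $Rc\ge0$, not the algebra after them.) By contrast, your plan requires a family of convex cones invariant under $Rm^2+Rm^{\sharp}$ interpolating from WPIC to $\{Rc\ge0\}\cap\{|Rm|\le R\}$ with logarithmic improvement; you do not exhibit it, and there is reason to doubt it exists in this generality: the known four-dimensional pinching families of Hamilton and Chen--Zhu are built under \emph{uniformly} PIC, and the paper's Section 6 records that PIC ancient solutions need not be uniformly PIC (Lai's examples) and leaves open whether they even have $Rm\ge0$. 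As written, this is a program rather than a proof.

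There is also a concrete error in your argument for $Rc>0$ under PIC. A local splitting off a flat $\R$-factor does \emph{not} contradict PIC: the round cylinder $\rS^3\times\R$ is conformally flat with $R>0$, so $A=C=\frac{R}{12}I>0$ and it has PIC, yet it has a flat direction. The isotropic curvature of a four-frame containing the flat direction is a Ricci curvature of the $\rS^3$-factor, not a single vanishing sectional curvature, so ``such a factor yields an isotropic $2$-plane of zero curvature'' is false and your contradiction evaporates --- the shrinking cylinder is precisely the ancient solution any such argument must confront. Moreover, invoking the strong maximum principle for $Rc$ itself is delicate in dimension four, since $Rc\ge0$ is not a Ricci-flow-invariant condition there and the null space of $Rc$ is not automatically parallel. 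The paper instead argues on the reaction term: at a point where $\Lambda_1=0$ is attained, the nonnegative reaction term displayed above must vanish, and strict positivity $A>0$, $C>0$ (Lemma \ref{lem:Cao-Xie}(b)) then forces $B_1=B_2=B_3=0$, whence $\Lambda_1=R/4>0$, the desired contradiction.
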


\begin{cor}
Every $4$-dimensional complete gradient steady soliton $(M^4, g, f)$ with WPIC has 2-nonnegative Ricci curvature and bounded curvature 
\[|Rm| \leq R \leq 1,\] where $g$ is normalized so that $R+|\na f|^2=1$. 
\end{cor}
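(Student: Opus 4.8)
The plan is to deduce the corollary directly from Theorem~\ref{thm:4Dancient} by passing to the canonical self-similar solution associated to the steady soliton. First I would recall that a complete gradient steady Ricci soliton $(M^4, g, f)$ generates a self-similar solution to the Ricci flow: letting $\phi_t$ denote the one-parameter group of diffeomorphisms generated by $\na f$, the family $g(t) = \phi_t^* g$ solves the Ricci flow on $(-\infty, \infty)$ and in particular is an eternal, hence ancient, solution. Since each $g(t)$ is isometric to $(M^4, g)$, and WPIC is a pointwise, diffeomorphism-invariant curvature condition, every time slice $g(t)$ has nonnegative isotropic curvature. Thus $(M^4, g(t))_{-\infty < t \le 0}$ is a complete ancient solution with WPIC.

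Applying Theorem~\ref{thm:4Dancient} to this ancient solution at $t = 0$ then yields $Rc \ge 0$ and $|Rm| \le R$ for the soliton metric $g = g(0)$. This gives the first two conclusions of the corollary.

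It remains to establish the normalization and the bound $R \le 1$. Here I would invoke the classical steady soliton identities. Tracing the steady soliton equation $Rc + \na^2 f = 0$ gives $R + \Delta f = 0$, and the contracted second Bianchi identity yields $\na_i R = 2 R_{ij}\na_j f$; combining these shows that $\na(R + |\na f|^2) = 0$, so $R + |\na f|^2$ is constant. Normalizing this constant to $1$, we obtain $R = 1 - |\na f|^2 \le 1$. On the other hand, $Rc \ge 0$ forces $R = \tr(Rc) \ge 0$, so in fact $0 \le R \le 1$, which completes the proof.

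The only genuinely nontrivial input is Theorem~\ref{thm:4Dancient} itself; modulo that result, the main point to verify carefully is that the flow generated by $\na f$ is complete, so that the self-similar solution is truly defined for all $t \le 0$ and the ancient solution is complete at every time slice. This completeness holds for gradient Ricci solitons, so I do not expect any serious obstacle beyond correctly setting up this reduction together with the elementary steady-soliton identities.
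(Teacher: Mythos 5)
Your argument is correct and is exactly the derivation the paper intends: the steady soliton generates an eternal (hence ancient) self-similar solution with WPIC, so Theorem~\ref{thm:4Dancient} gives $Rc\ge 0$ and $|Rm|\le R$, and the Hamilton identity $R+|\na f|^2=\mathrm{const}$ (Lemma~\ref{lem:Hamilton}(iii) with $\rho=0$), normalized to $1$, gives $R=1-|\na f|^2\le 1$. The completeness of the flow of $\na f$, which you rightly flag, is immediate here since $|\na f|^2=1-R\le 1$ is bounded.
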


\begin{rmk}
Curvature estimates in the form of $|Rm|\leq CR$ for 4D gradient Ricci solitons were first derived by Munteanu-Wang \cite{MW} in the shrinking case (with bounded scalar curvature) and subsequently obtained for $4$D gradient steady solitons in \cite{CCui20}, \cite{Chan1}, \cite[Theorem 5.2]{Cao:22}, \cite{CFSZ} and \cite{CMZ:21} under certain conditions; see also related results in \cite{CRZ} for 4D Ricci shrinkers and \cite{CaoLiu} for 4D Ricci expanders.  
\end{rmk}

Our next  two main results are about classifications of  4-dimensional gradient steady Ricci solitons with WPIC and half WPIC, respectively.
 
\begin{thm} \label{thm:WPIC}
	Let $(M^4, g, f)$ be a $4$-dimensional complete noncompact, non-flat, gradient steady Ricci soliton with nonnegative isotropic curvature. Then, either
	\begin{itemize}
		\item[(i)] $(M^4, g, f)$ has PIC and 2-positive Ricci curvature, or
		\smallskip	
		\item[(ii)] $(M^4, g, f)$ is a gradient steady K\"ahler-Ricci soliton with positive holomorphic bisectional curvature\footnote{In fact, $(M^4, g, f)$ has $Rm> 0$ when restricted to the real $(1, 1)$-forms $\wedge^{1,1}_{\R}(M)\subset \wedge^2(M)$.}, or
		\smallskip
		\item[(iii)] $(M^4, g, f)$  is isometric to a quotient of either $N^3\times \R$, or $\Sigma^2\times \Sigma^2$, or $\Sigma^2\times \R^2$. Here, $\Sigma^2$ is the cigar soliton; $N^3$ is the Bryant soliton or a flying wing.
	\end{itemize}
\end{thm}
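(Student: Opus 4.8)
The plan is to view the steady soliton as a self-similar \emph{ancient} solution $g(t)$, $-\infty < t \le 0$, to the Ricci flow, so that Theorem~\ref{thm:4Dancient} applies directly and yields $Rc \ge 0$ together with the bound $|Rm| \le R$, while the corollary above provides uniformly bounded curvature $|Rm| \le R \le 1$. Since nonnegative isotropic curvature is preserved along the flow (Hamilton; Brendle--Schoen), $g(t)$ is an ancient solution whose curvature operator remains in the closed convex WPIC cone. The first step is then to apply the strong maximum principle \cite{Ha:86} in the form adapted to this cone: either the curvature lies in the \emph{interior} of the WPIC cone (so that $g$ has PIC), in which case the last clause of Theorem~\ref{thm:4Dancient} upgrades $Rc \ge 0$ to $Rc > 0$ and we are in case (i); or the solution meets the boundary of the cone at an interior space-time point, and the strong maximum principle forces the associated ``null'' isotropic $2$-planes to be invariant under parallel transport, reducing the restricted holonomy of $g$.

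The core of the argument is to convert this holonomy reduction into the stated alternatives via the block decomposition~\eqref{eq:Rmdecomp}. Since $W^\pm$ are trace-free, the eigenvalues $a_1 \le a_2 \le a_3$ of $A = W^+ + \tfrac{R}{12}I$ satisfy $a_1 + a_2 + a_3 = \tfrac{R}{4}$, and WPIC on the self-dual side reads $a_1 + a_2 \ge 0$ (and similarly $c_1 + c_2 \ge 0$ for $C = W^- + \tfrac{R}{12}I$). On the boundary one has, say, $a_1 + a_2 = 0$, and I would distinguish two possibilities for the parallel null object produced by the maximum principle. \emph{First}, a nonzero parallel self-dual (or anti-self-dual) $2$-form, which corresponds to the degenerate pair being $a_1 = a_2 = 0$ --- the algebraic signature of a K\"ahler surface --- and defines a parallel orthogonal complex structure $J$; then $(M^4, g, f)$ is a gradient steady K\"ahler--Ricci soliton. \emph{Second}, a nontrivial parallel distribution, in which case the universal cover splits isometrically as a Riemannian product. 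In the K\"ahler situation I would then run the strong maximum principle once more, now for the holomorphic bisectional curvature (which is nonnegative and preserved), and combine it with $Rc \ge 0$ and non-flatness to promote nonnegativity to \emph{positive} holomorphic bisectional curvature, equivalently $Rm > 0$ on $\wedge^{1,1}_{\R}(M)$, giving case (ii).

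It remains to classify the reducible case. Here I would use that a gradient soliton structure passes to the de~Rham factors: the potential $f$ splits as a sum and each factor is again a complete gradient steady soliton with WPIC and $Rc \ge 0$. Organizing by the dimensions of the factors, a splitting $M = N^3 \times \R$ reduces $N^3$ to a $3$-dimensional complete gradient steady soliton with $Rc \ge 0$, which by B.-L. Chen \cite{ChenBL:09} and Hamilton's strong maximum principle is flat, or isometric to $\Sigma^2 \times \R$, or has $Rm > 0$; in the last case Brendle's theorem \cite{Brendle13} ($\kappa$-noncollapsed) and Lai's resolution of the flying-wing conjecture \cite{Lai3} identify $N^3$ as the $3$D Bryant soliton or a flying wing. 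A splitting with a $2$-dimensional steady factor forces that factor to be the cigar $\Sigma^2$, the unique non-flat complete $2$D gradient steady soliton \cite{Ha:88}, with complementary factor $\R^2$ or a second cigar, yielding $\Sigma^2 \times \R^2$ or $\Sigma^2 \times \Sigma^2$. Passing back to quotients recovers the models in case (iii).

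I expect the main obstacle to be the boundary analysis of the second paragraph: extracting from the strong maximum principle the precise parallel object --- a self-dual $2$-form (K\"ahler) versus a parallel distribution (splitting) --- and, in the K\"ahler case, upgrading nonnegative to strictly positive holomorphic bisectional curvature. Once this dichotomy is in hand, the reducible case rests only on the quoted lower-dimensional classifications.
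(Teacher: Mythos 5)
Your overall architecture (reduce to the ancient-solution results of Theorem \ref{thm:4Dancient}, apply a strong maximum principle to the degenerate locus, then organize the outcome by holonomy and the de Rham decomposition) is the same as the paper's, and your treatment of the reducible case and of case (i) is essentially what the paper does. But the step you yourself flag as ``the main obstacle'' is a genuine gap, and it is precisely where the paper does its real work. Your claim that hitting the boundary of the WPIC cone produces a parallel null isotropic $2$-plane and thereby ``reduces the restricted holonomy'' is not established: a parallel family of degenerate orthonormal four-frames does not by itself yield a parallel $2$-form or a parallel distribution. The paper's route is different and more indirect. First, it invokes Lemma \ref{lem:Cao-Xie} (which rests on an ODE/maximum-principle analysis for ancient solutions from \cite{Cao-Xie}, not on the WPIC cone being preserved) to upgrade the $2$-nonnegativity of $A$ and $C$ to genuine nonnegativity $A\geq 0$, $C\geq 0$. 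This makes $\ker(A)$ and $\ker(C)$ parallel subbundles of $\wedge^{\pm}(M)$ by Hamilton's strong maximum principle applied to the evolution of $A$ along the canonical flow. Second, instead of extracting a K\"ahler or product structure from a null direction, the paper rules out the coexistence of a null direction with full holonomy: if $\mathrm{Hol}^0=\mathrm{SO}(4)$ and $A_1=0$ somewhere, then Derdzinski's normal form for unit self-dual forms \cite{Derdzinski:00} together with the transitivity of the holonomy action forces $A_3=A_1=0$, hence $R=0$ at that point, hence Ricci-flatness by \cite{Petersen-W:09} (Remark \ref{rmk:R>0}), and then flatness by Lemma \ref{lem:WPIC}(c) --- contradicting non-flatness. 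Only after this does Berger's classification of the remaining holonomy groups ($\mathrm{U}(2)$, $\mathrm{SU}(2)$, locally symmetric, reducible) deliver cases (ii) and (iii). Without some version of this argument your dichotomy ``parallel $2$-form versus parallel distribution'' is unsupported.

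The second gap is in case (ii). You assert that the holomorphic bisectional curvature of the K\"ahler factor ``is nonnegative and preserved'' and then propose a strong maximum principle for it; neither assertion is justified in your write-up, and the nonnegativity is not an elementary algebraic consequence of WPIC. The paper instead quotes the fact (specific to gradient steady K\"ahler--Ricci solitons of complex dimension two with WPIC, \cite[Lemma 4.6]{Cho-Li:23}) that the full curvature operator is nonnegative, and then combines Hamilton's strong maximum principle for $Rm$ \cite[Theorem 8.3]{Ha:86} with the holonomy argument of \cite{Cao-Chow:86}, using local irreducibility, non-symmetry and $\mathrm{Hol}^0=\mathrm{U}(2)$, to conclude $Rm>0$ on $\wedge^{1,1}_{\R}(M)$. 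You would need to supply an input of this strength to close case (ii).
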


Since the cigar soliton and 3D flying wings are collapsed, and also there is no $\kappa$-noncollapsed gradient steady K\"ahler-Ricci soliton with positive holomorphic bisectional curvature by a result of  Deng-Zhu \cite{DZ2:20}, we immediately have the following 

\begin{cor} \label{cor steadyWPIC} Let $(M^4, g, f)$ be a $4$-dimensional complete noncompact,  non-flat, $\kappa$-noncollapsed gradient steady Ricci soliton with nonnegative isotropic curvature. Then, either 
	\begin{itemize}
		\item[(a)] $(M^4, g, f)$ has PIC and 2-positive Ricci curvature, or
				
		\smallskip	
		\item[(b)] $(M^4, g, f)$ is isometric to a quotient of the product $N^3\times \R$ of the 3D Bryant soliton with the real line.  
	\end{itemize}
\end{cor}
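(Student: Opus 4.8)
The plan is to obtain this corollary directly from Theorem \ref{thm:WPIC} by discarding, one at a time, the alternatives that are incompatible with the $\kappa$-noncollapsing hypothesis. Since $(M^4,g,f)$ is a complete noncompact non-flat gradient steady soliton with nonnegative isotropic curvature, Theorem \ref{thm:WPIC} places it in exactly one of the mutually exclusive classes (i), (ii), (iii). Class (i) is verbatim conclusion (a), so there is nothing to prove in that case; the whole task is to show that $\kappa$-noncollapsing eliminates (ii) and collapses (iii) down to the single model $N^3\times\R$ with $N^3$ the Bryant soliton.

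First I would dispose of case (ii). There $(M^4,g,f)$ would be a gradient steady K\"ahler-Ricci soliton with positive holomorphic bisectional curvature, and by the result of Deng-Zhu \cite{DZ2:20} no such soliton is $\kappa$-noncollapsed; hence this alternative is vacuous under our standing hypothesis.

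Next I would treat case (iii), whose three models are $N^3\times\R$, $\Sigma^2\times\Sigma^2$ and $\Sigma^2\times\R^2$. The geometric input I would invoke is that the cigar soliton $\Sigma^2$ is $\kappa$-collapsed: it is asymptotic to a cylinder of finite circumference, so along the end the scalar curvature tends to zero while the length of the collapsing circle factor remains bounded, and one cannot bound $\Vol(B(p,\rho))$ from below by $\kappa\rho^n$ for any fixed $\kappa>0$. Since a Riemannian product with a $\kappa$-collapsed factor is again $\kappa$-collapsed, both $\Sigma^2\times\Sigma^2$ and $\Sigma^2\times\R^2$ are $\kappa$-collapsed and are therefore excluded. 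For $N^3\times\R$ I would recall from Lai's work \cite{Lai2, Lai3} that every flying wing is collapsed, so $N^3\times\R$ with $N^3$ a flying wing is likewise $\kappa$-collapsed and excluded. The only surviving model is $N^3\times\R$ with $N^3$ the 3D Bryant soliton; the Bryant soliton is $\kappa$-noncollapsed, and its product with the flat line $\R$ remains $\kappa$-noncollapsed because $\kappa$-noncollapsing passes to products with a flat factor. This is precisely conclusion (b).

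I do not expect a genuine obstacle here: the argument is a clean elimination of cases resting on Theorem \ref{thm:WPIC} together with two external facts, namely the Deng-Zhu nonexistence theorem \cite{DZ2:20} and the $\kappa$-collapsedness of the cigar soliton and of the flying wings. The only point demanding a little care is the bookkeeping that $\kappa$-(non)collapsing behaves correctly under Riemannian products, which is what simultaneously justifies discarding the two $\Sigma^2$-factor models and keeping $N^3\times\R$ with $N^3$ the Bryant soliton; this is standard and follows at once from the product form of the metric.
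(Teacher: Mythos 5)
Your proposal is correct and follows essentially the same route as the paper: the authors also deduce the corollary immediately from Theorem \ref{thm:WPIC} by noting that the cigar soliton and the flying wings are collapsed (eliminating the $\Sigma^2$-factor products and the flying-wing case of (iii)) and that Deng--Zhu \cite{DZ2:20} rules out a $\kappa$-noncollapsed steady K\"ahler--Ricci soliton with positive holomorphic bisectional curvature (eliminating (ii)). The extra bookkeeping you supply about $\kappa$-(non)collapsing under Riemannian products is left implicit in the paper but is exactly the intended justification.
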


\begin{rmk} 
Based partly on the works of Hamilton \cite{Ha:86, Ha:97} and Chen-Zhu \cite{Chen-Zhu:06}, Brendle  \cite {Brendle:14} showed that any $4$D complete, $\kappa$-{noncollapsed} gradient steady Ricci soliton with {\bf uniformly} PIC (see also p.18 for the definition) is isometric to the Bryant soliton. However, the rigidity problem is still open if the strictly stronger assumption of uniformly PIC is replaced by PIC. 
\end{rmk}

\begin{thm} \label{thm:halfWPIC}
	Let $(M^4, g, f)$ be a 4-dimensional complete, non-flat, gradient steady Ricci soliton with half nonnegative isotropic curvature. Then, either
	\begin{itemize}
		\item[(i)] $(M^4, g, f)$ has half positive isotropic curvature, or
\smallskip
              \item[(ii)] $(M^4, g, f)$ is Calabi-Yau, hence locally hyperK\"ahler, or
\smallskip
              \item[(iii)] $(M^4, g, f)$ is Ricci-flat and self-dual, or                        
\smallskip		
		\item[(iv)] $(M^4, g, f)$ is a locally irreducible, non-Ricci-flat, gradient steady K\"ahler-Ricci soliton, or
	\smallskip	
		\item[(v)] $(M^4, g, f)$ is isometric to a quotient of either $N^3\times \R$, or $\Sigma^2\times \Sigma^2$, or $\Sigma^2\times \R^2$.
 Here $\Sigma^2$ is the cigar soliton, and $N^3$ is either the Bryant soliton or a flying wing.
	\end{itemize}
\end{thm}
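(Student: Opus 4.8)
The plan is to fix the orientation so that the anti-self-dual block $C=W^-+\frac{R}{12}I$ is the $2$-nonnegative one (reversing orientation interchanges $A$ and $C$, and half WPIC is an orientation-free hypothesis), and then to run a strong maximum principle argument driven by the degeneracy of $C$. A steady soliton generates an eternal, hence ancient, solution $g(t)$ of the Ricci flow; the cone ``$C$ is $2$-nonnegative'' is invariant under the Hamilton ODE and preserved along the flow, and the curvature is bounded by the a priori estimates for $4$D gradient steady solitons recalled in the remark after Theorem \ref{thm:4Dancient}. This puts us in position to apply Hamilton's strong maximum principle for systems.

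First I would dispose of the non-degenerate case: if $C$ is in fact $2$-positive everywhere, then $(M^4,g,f)$ has half PIC and we are in alternative (i). So assume $C$ is $2$-nonnegative but not $2$-positive, i.e.\ its two smallest eigenvalues satisfy $\lambda_1(C)+\lambda_2(C)=0$ at some point. Since the nonnegative quantity $\lambda_1(C)+\lambda_2(C)$ (equivalently, the induced operator $\wedge^2 C$ on $\wedge^2\wedge^-(M)$) vanishes somewhere, the strong maximum principle forces this vanishing on all of $M\times(-\infty,0]$ and makes the corresponding null subspace of $\wedge^-(M)$ invariant under parallel transport and under the curvature ODE. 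This reduces the restricted holonomy to a proper subgroup of $SO(4)$.

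Next I would translate each admissible reduction into one of the remaining alternatives. Because $\dim\wedge^-(M)=3$, the null direction of $\wedge^2 C$ is Hodge-dual to the top eigendirection of $C$, so the generic outcome is a parallel anti-self-dual $2$-form $\omega$. If $\omega$ is non-degenerate, its normalization defines a parallel almost complex structure, so $(M,g)$ is Kähler (for the orientation opposite to $J$) and, since every Kähler gradient steady soliton is a gradient steady Kähler-Ricci soliton, we are in the Kähler regime: if it is non-Ricci-flat and locally irreducible this is (iv), while if it is Ricci-flat it is Calabi-Yau, hence locally hyperkähler, which is (ii). If instead $\omega$ is decomposable, or the reduction produces a parallel vector field, the universal cover de Rham splits; the lower-dimensional factors are then pinned down by the classification of steady solitons in dimensions $\le 3$ — Hamilton's cigar $\Sigma^2$ in dimension two and the Bryant soliton/flying wings in dimension three — giving the quotients of $N^3\times\R$, $\Sigma^2\times\Sigma^2$, or $\Sigma^2\times\R^2$ in (v). Finally, the Ricci-flat non-Kähler possibility is settled by a trace computation: when $R=0$ one has $C=W^-$ with $\operatorname{tr}W^-=0$, so $\lambda_1(C)+\lambda_2(C)=0$ together with $2$-nonnegativity forces $\lambda_1=\lambda_2=\lambda_3=0$, i.e.\ $W^-=0$, and $(M,g)$ is Ricci-flat and self-dual, which is (iii).

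The main obstacle I anticipate is the holonomy step: carrying out the precise eigenvalue bookkeeping on $\wedge^-(M)$ to enumerate exactly which null structures of the degenerate operator $C$ occur, and matching them cleanly to the Kähler, splitting, and self-dual outcomes. In particular one must use the coupling between the two blocks through $B=\mathring{Rc}$ and the second Bianchi identity to exclude spurious degenerations and to upgrade the ODE-invariant null subspace to a genuinely parallel one, so that the holonomy truly reduces and the produced $2$-form is parallel. Verifying that half WPIC (the $2$-nonnegativity of $C$) is preserved along the flow, and that the strong maximum principle is legitimate in this complete noncompact eternal setting, are the technical prerequisites underpinning the whole argument.
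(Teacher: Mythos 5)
Your overall architecture (maximum principle, holonomy reduction, Berger's classification, de Rham splitting) is the right family of ideas, but there is a genuine gap at the central step, and it is precisely the step the paper handles differently. You propose to run a strong maximum principle directly on the $2$-nonnegativity of $C$, i.e.\ on $\lambda_1(C)+\lambda_2(C)\geq 0$, and to conclude that the null subspace of the induced operator on $\wedge^2\wedge^-(M)$ is parallel. But the null-space invariance required by Hamilton's (or a Bony-type) strong maximum principle is never verified: the reaction term in the evolution of $C$ is $2(C^2+2C^{\sharp}+{}^tBB)$, and $C^{\sharp}$ need not be nonnegative on the relevant null directions while $C$ still has a negative eigenvalue. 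The paper avoids this entirely by first invoking Lemma \ref{lem:Cao-Xie} (that is, \cite[Proposition 3.1]{Cao-Xie}), which upgrades half WPIC for an ancient solution to the much stronger statement that $A\geq 0$ or $C\geq 0$ outright; that upgrade is proved with Chen's Lemma (an ODE argument special to ancient solutions, not a maximum principle on the degenerate set), and it is exactly what makes $C^2+2C^{\sharp}+{}^tBB\geq 0$ so that $\ker C$ is invariant under parallel translation. Your proposal omits this upgrade entirely, and it is the crucial input.

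Second, your enumeration of degenerate outcomes does not actually follow from the claimed parallel structure. A nonzero anti-self-dual $2$-form is never decomposable (one has $\omega\wedge\omega=-|\omega|^2\,dV$), so the branch ``if $\omega$ is decomposable \dots the universal cover splits'' is vacuous; alternative (v) must be extracted as a separate local-reducibility case at the top of the argument, as the paper does, not from the parallel $2$-form. Likewise, you list ``the Ricci-flat non-K\"ahler possibility'' but give no mechanism producing it: in the paper it arises in the case $\mathrm{Hol}^{0}(M^4,g)=\mathrm{SO}(4)$, where parallel-transporting a null eigenvector of the degenerate block onto a top eigenvector forces the largest and smallest eigenvalues to agree, hence $R=0$ at a point, hence Ricci-flatness everywhere by \cite[Proposition 3.2]{Petersen-W:09}; only then does the trace computation (which you do have correctly) yield $W^+=0$ or $W^-=0$. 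Without that derivation, your trichotomy ``K\"ahler / splits / Ricci-flat'' is reverse-engineered from the statement rather than proved, and the argument as written would not close.
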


\begin{rmk} Theorem \ref{thm:WPIC} and Theorem \ref{thm:halfWPIC} are valid under some slightly weaker assumption than WPIC (or half WPIC); see Remark  \ref{rmk:improving}.  
\end{rmk}

Furthermore, by using a Bony type strong maximum principle for degenerate elliptic equations from Brendle-Schoen \cite{Brendle-S:08}, we also obtain the following result for 4-dimensional complete gradient expanding Ricci solitons with WPIC.

\begin{thm} \label{thm:WPICexpander}
	Let $(M^4, g, f)$ be a 4-dimensional complete noncompact, non-flat, gradient expanding Ricci soliton with nonnegative isotropic curvature. Then, either
	\begin{itemize}
		\item[(i)] $(M^4, g, f)$ has positive isotropic curvature, or
		\smallskip		
		\item[(ii)] $(M^4, g, f)$ is a locally irreducible gradient expanding K\"ahler-Ricci soliton (with WPIC), or
		\smallskip
		\item[(iii)] $(M^4, g, f)$ is isometric to a quotient of either ${\hat N}^3\times \R$, where ${\hat N}^3$ is a 3-dimensional expanding Ricci soliton with $Rc> 0$, or ${\hat \Sigma} \times \R^2$, or ${\hat \Sigma}_1^2\times {\hat \Sigma}_2^2$, where ${\hat \Sigma}$, ${\hat \Sigma}_1^2$, and ${\hat \Sigma}_2^2$ are one of the 2-dimensional complete gradient expanding Ricci solitons in \cite{Bernstein-M:15} and \cite{Ramos:18} (see Theorem \ref{thm:expander} for more details).
	\end{itemize}
\end{thm}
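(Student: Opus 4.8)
The plan is to mirror the argument for the steady case (Theorem~\ref{thm:WPIC}), but since expanders are not ancient solutions the curvature estimates of Theorem~\ref{thm:4Dancient} are unavailable; in their place I would use the Bony-type strong maximum principle of Brendle-Schoen \cite{Brendle-S:08}. The starting point is that WPIC is preserved under the Ricci flow (Hamilton in dimension four, and Brendle-Schoen and Nguyen in general), so the self-similar solution $g(t)$, $t>0$, generated by the expanding soliton retains nonnegative isotropic curvature for all $t$. Using self-similarity, the curvature operator of $(M^4,g)$ satisfies an elliptic system, schematically
\[
	\Delta_f \Rm + Q(\Rm) + c\,\rho\,\Rm = 0,
\]
where $\Delta_f=\Delta-\na_{\na f}$ is the drift Laplacian, $Q(\Rm)$ is Hamilton's reaction term that preserves the WPIC cone, $\rho<0$, and $c$ is a convention-dependent constant.

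The dichotomy is governed by whether the $2$-nonnegativity of the blocks $A=W^{+}+\frac{R}{12}I$ and $C=W^{-}+\frac{R}{12}I$ is strict. If it is strict everywhere, then we are in case (i). Otherwise the sum of the two smallest eigenvalues of $A$ (or of $C$) vanishes somewhere, so $\Rm$ touches the boundary of the WPIC cone. I would then apply Bony's strong maximum principle to the (nonsmooth) lowest-isotropic-curvature function: because the cone is invariant under $Q(\Rm)$, contact with its boundary at one point propagates, and the null $2$-forms realizing the vanishing isotropic curvature span a distribution in $\wedge^2(M)$ invariant under parallel transport, hence under the restricted holonomy group of $(M^4,g)$.

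Given this parallel distribution, I would invoke the de Rham decomposition theorem together with Berger's holonomy classification. Either the holonomy is reducible, so $(M^4,g)$ splits locally as a Riemannian product whose factors are themselves lower-dimensional gradient expanding Ricci solitons carrying the induced curvature condition; or the holonomy reduces to $U(2)$, yielding the locally irreducible Kähler-Ricci expander of case (ii). In the reducible case the possibilities are $\hat N^3\times\R$, $\hat\Sigma\times\R^2$, and $\hat\Sigma_1^2\times\hat\Sigma_2^2$. The two-dimensional factors are complete expanding solitons and so appear on the Bernstein-Mettler \cite{Bernstein-M:15} and Ramos \cite{Ramos:18} lists recorded in Theorem~\ref{thm:expander}; for the three-dimensional factor $\hat N^3$ the induced WPIC condition forces $Rc\ge 0$, and strict positivity $Rc>0$ follows because any residual Ricci degeneracy would split off a further flat line, reducing to the $\hat\Sigma\times\R^2$ case. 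This produces the product cases in (iii).

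The main obstacle is the correct deployment of the strong maximum principle in the expanding regime. Unlike the steady case, where $Rc\ge 0$ and $|\Rm|\le R$ from Theorem~\ref{thm:4Dancient} constrain the geometry, here no a priori bounds are at hand, so the sign $\rho<0$ must be tracked carefully through the linear term to confirm both that the WPIC cone remains invariant and that the degeneracy locus is genuinely parallel rather than merely infinitesimally invariant. Verifying that the nonsmooth lowest-isotropic-curvature function satisfies, in the barrier sense, the degenerate elliptic differential inequality required by Bony's principle, and then extracting the de Rham splitting with the sharp $Rc>0$ conclusion for the three-dimensional factor, form the technical heart of the proof.
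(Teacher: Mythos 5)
Your proposal follows essentially the same route as the paper: apply the Bony-type strong maximum principle of Brendle--Schoen (Proposition 8 of \cite{Brendle-S:08}) to the canonical Ricci flow $g(t)=(1+t)\Phi(t)^{\ast}(g)$ generated by the expander, conclude that the set of degenerate isotropic frames is invariant under parallel transport (so that full holonomy $\mathrm{SO}(4)$ forces PIC, via \cite[Corollary 9.14]{Brendle:10b}), and then run the de Rham splitting and Berger classification to produce cases (ii) and (iii). Two small omissions in your case enumeration should be filled. First, Berger's theorem applies only to locally irreducible, \emph{non-symmetric} metrics, so the locally symmetric irreducible case must be excluded separately; it would force the metric to be Einstein with negative scalar curvature, contradicting $R\ge 0$, which follows from WPIC. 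Second, besides $\mathrm{U}(2)$ the restricted holonomy could a priori be $\mathrm{SU}(2)$, giving a Ricci-flat (Calabi--Yau) metric; Lemma \ref{lem:WPIC}(c) then forces flatness, contradicting the non-flatness hypothesis. Finally, your worry about tracking the sign $\rho<0$ through a drift-Laplacian equation is unnecessary: the paper avoids any elliptic soliton equation altogether by applying the strong maximum principle to the unmodified flow $g(\tau)$ at a fixed $\tau\in(0,1)$ and pulling the parallelism of the degenerate set back to $g(0)=g$ through the self-similarity $g(t)=(1+t)\Phi(t)^{\ast}(g)$.
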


Finally, motivated by the recent work of Li and Zhang \cite{Li-Zhang:22}, we observe that our earlier results in \cite{Cao-Xie} for 4-dimensional complete gradient shrinking Ricci solitons can be further improved.  In particular, we are able to provide a characterization of complete gradient K\"ahler-Ricci shrinkers, in complex dimension two, in terms of the nonnegativity of the middle eigenvalue $A_2$ of the matrix $A$ in (\ref{eq:Rmdecomp}).  

\begin{thm} \label{thm: compact}
	Let $(M^4, g, f)$ be a 4-dimensional complete gradient shrinking Ricci soliton with $H^1 (M, \Z_2)=0$, and $A_1\leq A_2 \leq A_3$ be the eigenvalues of $A= W^{+} + \frac{R} {12}I $.
	Then, $(M^4, g, f )$ is a K\"ahler-Ricci shrinker if and only if $A_2\ge 0$ (but $A_2\not >0$).
\end{thm}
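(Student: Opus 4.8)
The plan is to read off K\"ahlerity from the spectrum of $W^+$. Recall the classical fact (Derdzinski) that an oriented Riemannian $4$-manifold with $R>0$ is K\"ahler, for some compatible complex structure inducing the given orientation, exactly when $W^+$ has eigenvalues $(\frac{R}{6},-\frac{R}{12},-\frac{R}{12})$, equivalently when $A=W^{+}+\frac{R}{12}I$ has spectrum $(\frac{R}{4},0,0)$, i.e. $A_1=A_2=0<A_3$. This makes the ``only if'' direction immediate: on a K\"ahler-Ricci shrinker $R\ge 0$, and at each point either $R>0$, forcing the above spectrum and hence $A_2=0$, or $R=0$, forcing $W^+=0$ and again $A_2=0$; so $A_2\equiv 0$, which is $\ge 0$ but not $>0$ (the flat Gaussian $\C^2$ being included). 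The whole content is therefore the converse.

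For the converse, suppose $A_2\ge 0$ on $M$ with $A_2(p)=0$ at some $p$; the goal is $A_1\equiv A_2\equiv 0$. By B.-L. Chen \cite{ChenBL:09}, $R\ge 0$, and the strong maximum principle applied to $\Delta_f R=R-2|Rc|^2\le R$ shows that either $(M,g)$ is the flat Gaussian soliton---already K\"ahler---or $R>0$ everywhere, in which case $A_3\ge\frac{R}{12}>0$. Normalizing $\rho=\frac12$ and writing $B=\mathring{Rc}$, Hamilton's reaction terms give the weighted elliptic equation
\[\Delta_f A = A - A^2 - 2A^{\#} - B\,{}^{t}B,\qquad \Delta_f=\Delta-\la\na f,\na\,\cdot\,\ra,\]
where $B\,{}^{t}B\succeq 0$ and $A^{\#}$ is the adjugate, whose eigenvalues are the pairwise products $A_jA_k$. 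Via Uhlenbeck's trick and barrier (viscosity) arguments, this yields differential inequalities for the ordered eigenvalues $A_1\le A_2\le A_3$.

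The first main step is a strong maximum principle forcing $A_2\equiv 0$. Since $A_2\ge 0$ attains its minimum $0$ at $p$ but $\{A_2\ge 0\}$ is not a convex cone, and the relevant reaction term $-2A_1A_3$ has an unfavorable sign there, the classical Hopf lemma does not apply; instead I would invoke a Bony-type strong maximum principle for degenerate elliptic inequalities in the spirit of Brendle-Schoen \cite{Brendle-S:08} (already used here in Theorem \ref{thm:WPICexpander}) to propagate $A_2=0$ across $M$. Granting $A_2\equiv 0$, the cross term $A_2A_3$ drops out of the equation for the smallest eigenvalue, leaving
\[\Delta_f A_1\le A_1-A_1^2-(B\,{}^{t}B)_{11}\le A_1,\]
so that $A_1$ is a supersolution of $\Delta_f u\le u$; the weighted maximum principle on the shrinker (properness of $f$, finite weighted volume) then precludes a negative infimum and yields $A_1\ge 0$, hence $A_1\equiv 0$ since $A_1\le A_2=0$. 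Thus $A$ has spectrum $(\frac{R}{4},0,0)$ with $A_3$ simple (where $R>0$, i.e. everywhere in the non-flat case); its unit self-dual eigenform is then parallel---as in the K\"ahler cases treated in \cite{Li-Ni-Wang:18}, using the equation above together with the second Bianchi identity---and $H^1(M,\Z_2)=0$ removes the sign ambiguity in this eigenform, globalizing it to a parallel complex structure. Hence $(M^4,g,f)$ is K\"ahler.

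The crux is the strong maximum principle for the middle eigenvalue $A_2$. In contrast to the half-WPIC quantity $A_1+A_2$, whose nonnegativity defines a convex cone amenable to Hamilton-type arguments, the condition $A_2\ge 0$ is non-convex and its reaction term pushes $A_2$ upward rather than downward, so the usual Hopf reasoning fails outright; making the Bony/Brendle-Schoen propagation function in this degenerate, non-convex setting---precisely the refinement made available by \cite{Li-Zhang:22}---is the genuine difficulty. By comparison, the passage from $A_2\equiv 0$ to $A_1\equiv 0$ and the globalization of the complex structure are comparatively standard.
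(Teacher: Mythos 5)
Your ``only if'' direction is fine and matches the paper's observation that a K\"ahler shrinker, canonically oriented, has $A_1=A_2=0$. The converse, however, contains a genuine gap, and the gap sits exactly where you locate ``the crux'': you propose to first propagate $A_2\equiv 0$ across $M$ by a Bony-type strong maximum principle and only afterwards deduce $A_1\ge 0$. No argument is actually given for that propagation, and it is not clear one exists in the form you describe: the Bony/Brendle--Schoen principle propagates the vanishing set of a \emph{nonnegative} supersolution along the degenerate directions, whereas here the unfavorable reaction term $2A_1A_3$ (with $A_1$ possibly negative at this stage) pushes $A_2$ the wrong way, so $\{A_2=0\}$ has no reason to be open. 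Since your derivation of $A_1\ge 0$ is conditioned on $A_2\equiv 0$, the whole chain collapses at this first step. The logical order in the paper is the reverse, and the reversal is what makes the hypothesis $A_2\ge 0$ usable: in the eigenvalue ODE for the \emph{smallest} eigenvalue, $\frac{d}{dt}A_1\ge 2\bigl(A_1^2+2A_2A_3+(B\,{}^tB)_{11}\bigr)$, the cross term is $A_2A_3$, which is automatically nonnegative once $A_2\ge 0$; hence $\frac{d}{dt}A_1\ge 2A_1^2$ and Chen's Lemma (Lemma \ref{Chen's lem}) applied to the ancient canonical flow gives $A_1\ge 0$, i.e.\ $A\ge 0$, directly (this is Proposition \ref{pro:positiveC1}, the observation that the proof of \cite[Proposition 3.1]{Cao-Xie} only uses $A_2\ge 0$). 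No propagation of $A_2=0$ is needed at any point.

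From $A\ge 0$ the paper does not try to pin down the spectrum of $A$ pointwise by PDE arguments; instead it runs Hamilton's strong maximum principle on the canonical flow to make $\ker A$ parallel of constant rank, and then invokes Berger's holonomy classification (the argument of \cite[Theorem 1.2]{Cao-Xie}, which Theorem \ref{thm:dichotomy} records as depending only on $A\ge 0$ or $A>0$). The hypothesis $A_2\not>0$ is used only to exclude the alternative $A>0$ in the dichotomy, and $H^1(M,\Z_2)=0$ is used to exclude the non-K\"ahler $\Z_2$-quotients of $\rS^2\times\rS^2$ and $\rS^2\times\R^2$ and to globalize the parallel self-dual form in the $\mathrm{U}(2)$-holonomy case; the remaining cases (Gaussian soliton, $\rS^2\times\rS^2$, $\rS^2\times\R^2$, and the irreducible K\"ahler shrinkers) are all K\"ahler. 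The identity $A_2\equiv 0$ then comes out as a \emph{consequence} of the classification, not as an intermediate step. Your closing paragraph about the parallel eigenform and the role of $H^1(M,\Z_2)$ is in the right spirit, but as written the proof cannot be completed because its first and load-bearing step is unsupported.
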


The paper is organized as follows. In Section 2, we recall some basic facts which will be used in the proofs of Theorems \ref{thm:4Dancient}--\ref{thm: compact}. In Section 3, we study 4D gradient steady Ricci solitons with WPIC or half WPIC and prove Theorem \ref{thm:4Dancient}--Theorem \ref{thm:halfWPIC}. In particular, we show that any 4-dimensional {\it ancient solution} to the Ricci flow with WPIC must have 2-nonnegative Ricci curvature (Theorem \ref{prop:4Dancient}(a)) which is essential for deriving the curvature bound $|Rm|\le R$. In Section 4, we treat the expanding case and prove Theorem \ref{thm:WPICexpander}. In Section 5, we examine the shrinking case again and strengthen the results in our previous paper \cite{Cao-Xie} from which Theorem \ref{thm: compact} follows. Finally, in Section 6, we prove that the property of 2-nonnegative Ricci curvature is preserved by 4D Ricci flow with WPIC and briefly discuss PIC vs. uniformly PIC for 4D ancient solutions.

\section{Preliminaries}
In this section, we fix the notation and collect several basic facts and known results that we shall use later. We also refer the reader to our previous paper \cite{Cao-Xie} for more details.

\subsection{Nonnegative isotropic curvature}

Recall that a general Riemannian manifold $(M^n,g)$ of dimension $n\ge 4$ is said to have {\it nonnegative isotropic curvature}, or  {\it weakly positive isotropic curvature} (WPIC), if the Riemann curvature tensor $Rm=\{R_{ijkl}\}$ satisfies
the inequality 
\begin{equation} \label{eq:WPIC} 
	R_{1313}+R_{1414}+R_{2323}+R_{2424}-2R_{1234}\geq 0
\end{equation}
for any orthonormal four-frame $\{e_1,e_2,e_3,e_4\}$. For Riemannian manifolds with WPIC, we have the following basic properties.

\begin{lem} \textup{\bf (Brendle \cite{Brendle:10b} \& Micallef-Wang \cite{MW:93})} \label{lem:WPIC}
	Let $(M^n,g)$, $n\geq 4$, be a Riemannian manifold with nonnegative isotropic curvature. Then, for any orthonormal tangent frame $\{e_1, \cdots e_n\}$ and mutually distinct $i,j,k,l \in \{1,...,n\}$, we have

	\begin{itemize}
		\item[(a)] $ R_{ikik}+R_{ilil}+R_{jkjk}+R_{jljl} \geq 0; \label{eq1:WPIC} $
	\smallskip	
		\item[(b)] $ R_{ii}+R_{jj}-2R_{ijij}+(n-4)(R_{ikik}+R_{jkjk}) \geq 0; \label{eq2:WPIC} $
		
\smallskip
		\item[(c)] $ R_{ii}+R_{jj}-2R_{ijij} \geq 0;  \label{eq3:WPIC} $
\smallskip		
		\item[(d)] $ (n-4)R_{ii}+R \geq 0.  \label{eq4:WPIC} $
	\end{itemize}
	\end{lem}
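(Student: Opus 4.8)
The plan is to deduce all four inequalities from the single defining inequality \eqref{eq:WPIC} by evaluating it on cleverly chosen orthonormal four-frames and then summing over the remaining indices. Throughout I will use the two elementary trace identities $R_{ii}=\sum_{m\neq i}R_{imim}$ (the Ricci curvature in the direction $e_i$) and $R=\sum_i R_{ii}$ (the scalar curvature), together with the standard symmetries of the curvature tensor.

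First I would establish (a). Applying \eqref{eq:WPIC} to the orthonormal four-frame $\{e_i,e_j,e_k,e_l\}$ gives $R_{ikik}+R_{ilil}+R_{jkjk}+R_{jljl}-2R_{ijkl}\geq 0$. The difficulty here is the mixed term $R_{ijkl}$, which carries no definite sign. To remove it I exploit that WPIC holds for \emph{every} orthonormal four-frame: replacing $e_k$ by $-e_k$ leaves each term $R_{imim}$ unchanged (these are quadratic in each slot) but flips the sign of $R_{ijkl}$ (it is linear in the third slot). Adding the two resulting inequalities cancels the cross term and yields (a).

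Next, (b) follows by summing the clean inequality (a) over the fourth index $l$ ranging over the $n-3$ values in $\{1,\dots,n\}\setminus\{i,j,k\}$. Using $\sum_{l\neq i,j,k}R_{ilil}=R_{ii}-R_{ijij}-R_{ikik}$ and the analogous identity for the $j$-terms, the $(n-3)$ copies of $R_{ikik}+R_{jkjk}$ combine with the $-(R_{ikik}+R_{jkjk})$ coming from the trace identities to produce exactly the coefficient $(n-4)$, giving (b). Then (c) is obtained by summing (b) over $k\in\{1,\dots,n\}\setminus\{i,j\}$: the same trace identity turns $(n-4)\sum_k (R_{ikik}+R_{jkjk})$ into $(n-4)(R_{ii}+R_{jj}-2R_{ijij})$, so the whole sum collapses to $(2n-6)(R_{ii}+R_{jj}-2R_{ijij})\geq 0$, and dividing by $2n-6>0$ (valid since $n\geq 4$) yields (c). Finally, (d) comes from summing (c) over $j\neq i$: since $\sum_{j\neq i}R_{jj}=R-R_{ii}$ and $\sum_{j\neq i}R_{ijij}=R_{ii}$, the sum telescopes to $(n-1)R_{ii}+(R-R_{ii})-2R_{ii}=(n-4)R_{ii}+R\geq 0$.

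The only genuinely non-bookkeeping step is the elimination of the antisymmetric term $R_{ijkl}$ in (a) via the orientation-reversal of a frame vector; once (a) holds without a cross term, the remaining parts form a disciplined chain of summations in which the factor $(n-4)$ emerges automatically from the trace identities. I would take care, in each summation, to count multiplicities correctly and to confirm that the coefficients $n-3$, $2n-6$, and $n-1$ are strictly positive for $n\geq 4$, so that the final divisions preserve the inequalities.
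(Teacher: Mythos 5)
Your proof is correct and takes essentially the same route as the paper: parts (b)--(d) are obtained by exactly the same chain of summations with the trace identities, and your elimination of the cross term in (a) by averaging the WPIC inequality over the frames $\{e_i,e_j,\pm e_k,e_l\}$ is the same mechanism the paper uses, only packaged there as a case split on the sign of $R_{ijkl}$ together with the swapped frame $\{e_j,e_i,e_k,e_l\}$.
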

For the reader's convenience, we also include a proof here. 
\begin{proof}
	Since $(M^n,g)$ has WPIC, inequality (\ref{eq:WPIC}) holds for any orthonormal four-frame $\{e_1,e_2,e_3,e_4\}$. If $R_{1234}\geq 0$, then we have
	$$ R_{1313}+R_{1414}+R_{2323}+R_{2424}\geq 0.$$
	On the other hand, if $R_{1234}<0$ then $R_{2134}>0$ and we can consider the four-frame $\{e_2,e_1,e_3,e_4\}$. Using (\ref{eq:WPIC}) again, we have
	$$ R_{2323}+R_{2424}+R_{1313}+R_{1414}-R_{2134}\geq 0,$$
	which implies that 
	$$ R_{1313}+R_{1414}+R_{2323}+R_{2424}\geq 0.$$
	Thus, we have
	$$ R_{ikik}+R_{ilil}+R_{jkjk}+R_{jljl} \geq 0 $$
	whenever $i,j,k,l \in \{1,...,n\}$ are mutually distinct.
	
	Next, by summing over $l\in \{1,...,n\}\backslash \{i,j,k\}$ in the above inequality for mutually distinct $i, j, k \in \{1,...,n\}$, we get
	\begin{equation*} 
		R_{ii}+R_{jj}-2R_{ijij}+(n-4)(R_{ikik}+R_{jkjk}) \geq 0.
	\end{equation*}
	If we further sum over $k\in \{1,...,n\}\backslash \{i,j\}$ for  mutually distinct $i, j \in \{1,...,n\}$, then it follows that 
	\begin{equation*}
		R_{ii}+R_{jj}-2R_{ijij} \geq 0.
	\end{equation*}
	Finally, summing over $j\in \{1,...,n\}\backslash \{i\}$ yields
	\begin{equation*}
		(n-4)R_{ii}+R \geq 0.
	\end{equation*}
	This finishes the proof of the lemma.
\end{proof}

\subsection{Curvature decomposition of four-manifolds}

For any oriented Riemannian 4-manifold $(M^4,g)$, we consider the decomposition of the bundle of 2-forms 
\begin{equation} \label{eq:decompof2forms}
\wedge^2(M) = \wedge^{+}(M) \oplus \wedge^{-}(M), 
\end{equation}
by {\em self-dual} 2-forms $\wedge^+ (M)$  and {\em anti-self-dual} 2-forms $\wedge^- (M)$, as well as the 
corresponding decomposition of the curvature operator, 
\begin{equation} \label{eq:CODecomposition}
	Rm = 
	\begin{pmatrix}
		A & B \\
		{}^{t}B & C
	\end{pmatrix}
	=
	\begin{pmatrix}
		W^+ + \frac{R}{12}I & \mathring{Rc} \\
		\mathring{Rc} & W^- +\frac{R}{12}I
	\end{pmatrix}.
\end{equation}
Here, $W^{\pm}$ denote the self-dual and anti-self-dual Weyl curvature tensors, respectively, $\mathring{Rc}$ denotes the traceless Ricci tensor\footnote{More precisely, the operator $B:\wedge^{-}(M)\to \wedge^{+}(M)$ is given by $\mathring{Rc}\KN g$, the Kulkarni-Nomizu product of $\mathring{Rc}$ and $g$. In particular, $B$ is identically zero when $(M^4, g)$ is Einstein.}, and $R$ denotes the scalar curvature.

Now, let us denote by $$A_1\leq A_2 \leq A_3 \qquad {\mbox{and}} \qquad  C_1 \leq C_2 \leq C_3$$ the eigenvalues of $A=W^+ + \frac{R}{12}I$ and $C=W^- + \frac{R}{12}I$, respectively,  and
$$a_1\leq a_2 \leq a_3 \qquad {\mbox{and}} \qquad  c_1 \leq c_2 \leq c_3$$ the eigenvalues of $W^+$ and $W^-$, respectively.  
Clearly,  we have
	$$  A_{1}=a_1+\frac{R}{12} , \quad  A_{2}=a_2+ \frac{R}{12}, \quad A_{3}=a_3+ \frac{R}{12},   $$
	$$  C_{1}=c_1+ \frac{R}{12}, \quad  C_{2}=c_2+ \frac{R}{12}, \quad C_{3}=c_3+\frac{R}{12}.  $$
Moreover, since $a_1+a_2+a_3=c_1+c_2+c_3=0$, we have 
 $$ \tr \ \!A = \tr  \ \!C = \frac{R}{4}.$$
Note also that $(M^n, g)$ is 
\begin {itemize} 

\smallskip
\item  {\it PIC} if and only if  $A_1+A_2 >0$ and $C_1+C_2>0$ \cite{Ha:97};

\smallskip
\item  {\it WPIC} if and only if $A_1+A_2 \geq 0$ and $C_1+C_2\geq 0$;

\smallskip
\item {\it Half PIC} if and only if either $A_1+A_2 >0$ or $C_1+C_2>0$;

\smallskip
\item  {\it Half WPIC}  if and only if either $A_1+A_2 \geq 0$ or $C_1+C_2\geq 0$.

\end {itemize}

\smallskip

Next, we let $0\leq B_1 \leq B_2 \leq B_3$ be the {\it singular eigenvalues} of $B$, 
$$ \lambda_1 \leq \lambda_2 \leq \lambda_3 \leq \lambda_4 \qquad {\mbox{and}} \qquad \Lambda_1 \leq \Lambda_2 \leq \Lambda_3 \leq \Lambda_4$$ be the eigenvalues of the traceless Ricci tensor $\mathring{Rc}$ and  the Ricci tensor $Rc$, respectively,   so that 
\begin{equation}\label{Rc-eigen}
 \Lambda_i=\lambda_i + \frac R 4, \qquad 1\le i\le 4.
\end{equation}

\begin{lem} \label{lem:RcandtracelessRc}
	Let $(M^4,g)$ be a 4-dimensional (oriented) Riemannian manifold.  Then, the sum of the least two eigenvalues of $Rc$ is given by
	\begin{equation*}
		\Lambda_1+\Lambda_2 = \frac 1 2(R-4B_3).
	\end{equation*}
\end{lem}

\begin{proof}
	It is well known (see, e.g.,  equation \cite[(2.7)]{Cao-Xie}) that 
	\begin{equation*} \label{traceless Rc&B}
		\begin{split}
			\mathring{Rc} = 
			\begin{pmatrix}
				B_{11}+B_{22}+B_{33} & B_{32}-B_{23} & B_{13}-B_{31} & B_{21}-B_{12} \\
				B_{32}-B_{23} & B_{11}-B_{22}-B_{33} & B_{21}+B_{12} & B_{13}+B_{31} \\
				B_{13}-B_{31} & B_{21}+B_{12} & B_{22}-B_{11}-B_{33} & B_{23}+B_{32} \\
				B_{21}-B_{12} & B_{13}+B_{31} & B_{23}+B_{32} & B_{33}-B_{11}-B_{22}
			\end{pmatrix},
		\end{split}
	\end{equation*}
	where $B_{ij}$ are the entries of the matrix $B$ in (\ref{eq:CODecomposition}). Now, we choose a frame $\{e_1,e_2,e_3,e_4\}$ such that the traceless Ricci tensor $\mathring{Rc}$ is diagonoal so that the matrix $B$ is also diagonal, with $B_{11}\leq B_{22}\leq B_{33}$. Then, we have
	\begin{equation} \label{eq:lambda}
		\begin{split}
			\lambda_{1}=B_{11}-B_{22}-B_{33} , \\
			\lambda_{2}= B_{22}- B_{11}-B_{33}  , \\
		\end{split}
		\quad \quad
		\begin{split}
			\lambda_{3}= B_{33}- B_{11}-B_{22}  , \\
			\lambda_{4}= B_{11}+B_{22}+B_{33}.
		\end{split}
	\end{equation}
In particular, 
\begin{equation} \label{eq:lambda sum}
\lambda_{1} + \lambda_{2}=-2B_{33}.
\end{equation}
		
On the other hand, by equation \cite[(2.5)]{Cao-Xie}, we have
	\begin{equation*}
		\begin{split}
			4B_{11}=\Lambda_{4}+\Lambda_{1}-\Lambda_{2}-\Lambda_{3}, \\
			4B_{22}=\Lambda_{4}+\Lambda_{2}-\Lambda_{3}-\Lambda_{1}, \\
			4B_{33}=\Lambda_{4}+\Lambda_{3}-\Lambda_{1}-\Lambda_{2}, \\
		\end{split}
	\end{equation*}
	which imply  $B_{33}\geq  B_{22}\geq 0$. 
Since $0\leq B_1\leq B_2\leq B_3$ are the singular eigenvalues of $B$, i.e., they are the eigenvalues of the matrix $\sqrt{B {}^tB}$, it follows that 
	\begin{equation} \label{eq:singularB}
		B_1= |B_{11}|=\pm B_{11},\quad B_2=|B_{22}|=B_{22},\quad B_3=|B_{33}|=B_{33}. 
	\end{equation}
Therefore, Lemma \ref{lem:RcandtracelessRc} follows immediately from (\ref{Rc-eigen}), (\ref{eq:lambda sum}), and (\ref{eq:singularB}). 
\end{proof}

\subsection{Some basic facts about Ricci solitons and ancient Ricci flows}

First, we recall the following basic identities satisfied by gradient Ricci solitons. 
\begin{lem} \textup{\bf (Hamilton \cite{Ha:93})} \label{lem:Hamilton}
	Let $(M^n,g,f)$ be an n-dimensional gradient Ricci soliton satisfying Eq. (\ref{eq:Riccisoliton}). Then
	\begin{itemize}
		\item[(i)] $ R + \Delta f = n\rho; $
		\smallskip
		\item[(ii)] $ \na_iR = 2R_{ij}\na_jf;$
		\smallskip
		\item[(iii)] $ R + |\na f|^2 = 2\rho f + C_0.$
	\end{itemize}
	where $C_0$ is some constant.
\end{lem}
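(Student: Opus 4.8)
The plan is to derive all three identities directly from the defining equation $R_{ij}+\na_i\na_j f = \rho g_{ij}$ by tracing, differentiating once, and invoking the contracted second Bianchi identity together with the Ricci commutation identity. No external input beyond these standard tools is needed.

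First, for (i), I would trace the soliton equation against $g$. Since $g^{ij}R_{ij}=R$, $g^{ij}\na_i\na_j f=\Delta f$, and $g^{ij}g_{ij}=n$, this yields $R+\Delta f = n\rho$ at once. Next, for (ii), I would take the divergence of the soliton equation in the index $j$, i.e. apply $\na^j$. The right-hand side is parallel and drops out, leaving $\na^j R_{ij}+\na^j\na_i\na_j f=0$. I handle the first term with the contracted second Bianchi identity $\na^j R_{ij}=\tfrac12\na_i R$, and the second term by commuting covariant derivatives on the one-form $\na_j f$: the Ricci identity gives $\na^j\na_i\na_j f = \na_i\Delta f + R_{il}\na^l f$. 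Substituting $\Delta f = n\rho - R$ from (i) turns $\na_i\Delta f$ into $-\na_i R$, so the relation collapses to $-\tfrac12\na_i R + R_{ij}\na^j f = 0$, which is exactly $\na_i R = 2R_{ij}\na_j f$.

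Finally, for (iii), I would show that the function $R+|\na f|^2-2\rho f$ has vanishing gradient and hence is constant on the connected manifold $M$. Differentiating, the soliton equation gives $\na_i|\na f|^2 = 2\na^j f\,\na_i\na_j f = 2\na^j f\,(\rho g_{ij}-R_{ij}) = 2\rho\na_i f - 2R_{ij}\na^j f$, while (ii) gives $\na_i R = 2R_{ij}\na^j f$ and trivially $\na_i(2\rho f)=2\rho\na_i f$. Adding these, the two $R_{ij}\na^j f$ terms cancel and the two $\rho\na_i f$ terms cancel, so $\na_i(R+|\na f|^2-2\rho f)=0$ and the expression equals a constant $C_0$.

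The computations are all routine algebraic substitutions; the only step demanding genuine care is the commutation in the proof of (ii), where one must pin down the curvature convention so that the Ricci identity produces the term $R_{il}\na^l f$ with the correct sign after contracting the four-index curvature tensor down to the Ricci tensor. Once that sign bookkeeping is fixed, identities (i) and the soliton equation itself do the rest, and (iii) follows purely formally from (ii).
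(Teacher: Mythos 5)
Your proof is correct and is precisely the standard derivation of Hamilton that the paper simply cites without reproducing: trace the soliton equation for (i), take the divergence and combine the contracted second Bianchi identity with the commutation formula $\na^j\na_i\na_j f=\na_i\Delta f+R_{il}\na^l f$ for (ii), and show $\na_i\bigl(R+|\na f|^2-2\rho f\bigr)=0$ for (iii). All the sign bookkeeping in your commutation step checks out, so nothing needs to be added.
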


Next, we recall that for any complete gradient Ricci soliton $(M^n,g,f)$ satisfying Eq. (\ref{eq:Riccisoliton}) there associates a canonical solution $g(t)$ to the Ricci flow defined by 
\begin{equation} \label{eq:canonicalRF}
	g(t)=(1-2\rho t)\Phi^{\ast}(t)(g) \qquad {\mbox{and}} \qquad g(0)=g, 
\end{equation}
where $\Phi(t)$ is the one-parameter family of diffeomorphisms generated by $\frac {\na f} {(1-2\rho t)}$, with $\Phi(0)=\textup{id}_M$.

We also recall the following curvature evolution equations in dimension $n=4$.
\begin{lem} \textup{\bf (Hamilton \cite{Ha:86})} \label{lem:RFequations}
	Let $(M^4,g(t))$ be a 4-dimensional complete solution to the Ricci flow. Then
	\begin{gather*}
		\pa_t R = \Delta R + 2|Rc|^2, \\
		\pa_t Rm =\Delta Rm + 2(Rm^2+Rm^{\sharp}), \\
		\pa_t A =\Delta A + 2(A^2 +2A^{\sharp} +B {}^tB), \\
		\pa_t B =\Delta B + 2(AB +BC + 2B^{\sharp}), \\
		\pa_t C =\Delta C + 2(C^2 +2C^{\sharp} +{}^tB B).
	\end{gather*}
Here, for any $3\times 3$ matrix $D$, $D^2$ denotes the matrix square of $D$, and $D^{\sharp}$ is the transpose of the adjoint matrix of $D$. 
\end{lem}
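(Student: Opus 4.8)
The plan is to follow Hamilton's original derivation in \cite{Ha:86}, splitting the task into the intrinsic reaction--diffusion evolution of the full curvature operator on $\wedge^2(M)$ and then its block decomposition with respect to (\ref{eq:CODecomposition}). The scalar equation $\pa_t R = \Delta R + 2|Rc|^2$ comes for free from the standard first variation of scalar curvature under $\pa_t g = -2Rc$ and needs no restriction on dimension. For the curvature operator, I would invoke Uhlenbeck's trick: pulling back the evolving bundle by the isometries solving $\pa_t e = Rc(e)$ makes the fiber metric on $\wedge^2(M)$ time-independent, so that $Rm$, viewed as a self-adjoint operator on $\wedge^2(M)$, satisfies the clean reaction--diffusion equation $\pa_t Rm = \Delta Rm + 2(Rm^2 + Rm^{\sharp})$. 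Here $Rm^2$ is the operator square and $Rm^{\sharp}$ the Lie-algebra square built from the structure constants of $\wedge^2(M)\cong\mathfrak{so}(4)$. This identity is dimension-independent; everything special to $n=4$ enters only in the decomposition below.

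Next I would decompose each term relative to $\wedge^2(M)=\wedge^{+}(M)\oplus\wedge^{-}(M)$. Because this splitting is parallel (the Hodge star is parallel), the rough Laplacian preserves it, so $\Delta Rm$ has diagonal blocks $\Delta A$, $\Delta C$ and off-diagonal block $\Delta B$. The operator square is immediate matrix multiplication of the $2\times 2$ block form in (\ref{eq:CODecomposition}): its blocks are $A^2 + B\,{}^{t}B$, $AB+BC$, and $C^2 + {}^{t}B\,B$. It remains to decompose $Rm^{\sharp}$, which is the crux.

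Here I would exploit the Lie-algebra splitting $\mathfrak{so}(4)\cong\mathfrak{su}(2)\oplus\mathfrak{su}(2)$, under which $\wedge^{+}(M)$ and $\wedge^{-}(M)$ are exactly the two $\mathfrak{su}(2)$ summands. The structure constants therefore decouple: they are nonzero only among indices lying in a single summand. Inspecting the defining contraction $(Rm^{\sharp})_{\alpha\beta}=c_\alpha^{\gamma\delta}c_\beta^{\epsilon\zeta}Rm_{\gamma\epsilon}Rm_{\delta\zeta}$, one sees that for the $(+,+)$ block all four summed indices are forced into $\wedge^{+}(M)$, so only the $A$-entries of $Rm$ appear; likewise the $(-,-)$ block involves only $C$, and the $(+,-)$ block pairs the indices across the two summands so that only the $B$-entries appear. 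In a normalized basis the structure constants of each $\mathfrak{su}(2)$ factor are a multiple of the Levi-Civita symbol $\epsilon_{ijk}$, and the identity $\tfrac12\epsilon_{\alpha\gamma\delta}\epsilon_{\beta\epsilon\zeta}P_{\gamma\epsilon}P_{\delta\zeta}=(\mathrm{adj}\,P)^{t}_{\alpha\beta}$ for a $3\times 3$ matrix $P$ collapses each contraction to the transpose-adjugate operation of the statement. Tracking the normalization constant yields the factor of $2$, so the blocks of $Rm^{\sharp}$ are $2A^{\sharp}$, $2B^{\sharp}$, and $2C^{\sharp}$, with no cross-contamination between $A$, $B$, and $C$.

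Assembling the three contributions gives
\begin{align*}
(Rm^2+Rm^{\sharp})_{++} &= A^2 + 2A^{\sharp} + B\,{}^{t}B, \\
(Rm^2+Rm^{\sharp})_{+-} &= AB + BC + 2B^{\sharp}, \\
(Rm^2+Rm^{\sharp})_{--} &= C^2 + 2C^{\sharp} + {}^{t}B\,B,
\end{align*}
and substituting into $\pa_t Rm = \Delta Rm + 2(Rm^2+Rm^{\sharp})$ and reading off the blocks produces exactly the stated equations for $A$, $B$, and $C$. I expect the main obstacle to be precisely the bookkeeping in the $\sharp$-computation: fixing a consistent normalization of the $\mathfrak{su}(2)\oplus\mathfrak{su}(2)$ structure constants, verifying the two $\epsilon$-contractions collapse to the cofactor on each $3\times 3$ block, and pinning down the resulting factor of $2$. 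Once the decoupling is set up correctly, the rest is routine algebra.
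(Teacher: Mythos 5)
Your proposal is correct and follows exactly the derivation the paper relies on: the lemma is quoted from Hamilton \cite{Ha:86} without proof, and your route --- the standard evolution of $R$, Uhlenbeck's trick giving the reaction--diffusion equation for $Rm$ as an operator on $\wedge^2(M)$, and the block decomposition via $\mathfrak{so}(4)\cong\mathfrak{su}(2)\oplus\mathfrak{su}(2)$ with the $\epsilon$-contractions collapsing to the transpose-adjugate on each $3\times 3$ block --- is precisely Hamilton's argument. The one bookkeeping caveat, which the paper itself flags in the remark immediately following the lemma, is that the overall factor $2$ multiplying $(Rm^2+Rm^{\sharp})$ and the block reaction terms is absent from Hamilton's original equations and arises from the different normalization of the inner product on $\wedge^2(M)$ adopted in \cite[(2.4)]{Cao-Xie}, so when you ``track the normalization constant'' you must attribute this outer factor to the inner-product convention rather than expect it to emerge from the $\mathfrak{su}(2)$ structure constants alone.
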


\begin{rmk}
	 Except for the first equation in Lemma \ref{lem:RFequations}, the factor $2$ in the rest of the equations differs from the corresponding equations in \cite{Ha:86} due to our slightly different definition of the inner product on $\wedge^2(M)$ as given in \cite[(2.4)]{Cao-Xie}.
\end{rmk}

Finally, we shall need several very useful properties concerning complete ancient solutions to the Ricci flow, of which gradient steady and shrinking Ricci solitons are special cases. 

\begin{lem} \textup{\bf (Chen \cite{ChenBL:09})} \label{lem:Chen}
	Let $g (t)$ be a complete ancient solution to the Ricci flow on $M^n$. Then, it has nonnegative scalar curvature $R\geq 0$  for each $t$.
\end{lem}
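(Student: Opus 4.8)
The plan is to exploit the favorable reaction term in the scalar curvature evolution equation together with an ODE comparison; the only genuine difficulty is upgrading the formal minimum principle to a rigorous one on a complete, possibly noncompact, manifold carrying no a priori curvature bound.

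First I would record the differential inequality satisfied by $R$. From the first equation in Lemma \ref{lem:RFequations},
\[
\partial_t R = \Delta R + 2|Rc|^2,
\]
and since $|Rc|^2 = \sum_i \Lambda_i^2 \geq \tfrac{1}{n}\big(\sum_i \Lambda_i\big)^2 = \tfrac{1}{n}R^2$ by Cauchy--Schwarz applied to the eigenvalues of $Rc$ against its trace, we obtain the reaction--diffusion inequality
\[
\partial_t R \geq \Delta R + \tfrac{2}{n}R^2 .
\]
The crucial structural feature is that the zeroth-order term is a nonnegative multiple of $R^2$, so the associated ODE drives negative values of $R$ to $-\infty$ in finite time.

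Next I would run the ODE comparison. If the minimum principle were available globally, then $m(t) := \inf_{x\in M} R(x,t)$ would satisfy $\tfrac{d}{dt}m \geq \tfrac{2}{n}m^2$ in the barrier sense. Comparing on a finite interval $[t_0, t_1]$ with the particular solution $-\tfrac{n}{2(t-t_0)}$ of $\phi' = \tfrac{2}{n}\phi^2$ (the one emanating from $-\infty$ at $t=t_0$, which lies below every solution starting from a finite value) yields the universal lower bound
\[
R(\cdot, t) \geq -\frac{n}{2(t - t_0)}, \qquad t \in (t_0, t_1].
\]
Because the solution is ancient, $t_0$ may be chosen arbitrarily negative; fixing $t$ and letting $t_0 \to -\infty$ forces $R(\cdot, t) \geq 0$. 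Equivalently, a hypothetical value $m(t_1) = -a < 0$ would propagate backward to a blow-up $m \to -\infty$ at the finite earlier time $t_1 - \tfrac{n}{2a}$, contradicting smoothness of the ancient solution there.

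The hard part, and the real content of the result, is justifying the minimum principle when $M$ is complete and noncompact with no bound on the curvature: the infimum $m(t)$ need not be attained, and a naive spatial cutoff $\eta$ produces boundary error terms controlled by $\Delta\eta$, i.e.\ by the very curvature one is trying to bound. The resolution I would follow is Chen's localized maximum principle: fix a cutoff $\eta = \psi(d_t(x_0, x))$ supported in a parabolic neighborhood, apply the maximum principle to the localized barrier quantity built from $R + \tfrac{n}{2(t-t_0)}$, and estimate the bad terms using the Ricci-flow distance-distortion inequality (if $Rc \leq (n-1)K$ on a ball $B_t(x_0, r_0)$, then $\tfrac{d}{dt}d_t(x_0, \cdot) \geq -C(n)\big(Kr_0 + r_0^{-1}\big)$) together with the Laplacian comparison theorem. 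A continuity/bootstrap argument in $t$, using that $R$ is smooth and the relevant region is compactly supported at each stage, breaks the circular dependence and upgrades the formal computation to a genuine local lower bound, from which $R \geq 0$ follows as above.
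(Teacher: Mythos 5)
Your argument is correct and is essentially the proof the paper relies on: the inequality $(\partial_t-\Delta)R \geq \tfrac{2}{n}R^2$ combined with Chen's localized maximum principle on ancient flows is precisely the content of Lemma \ref{Chen's lem} (applied with $u=R$ and $\delta = 2/n$), which the paper quotes from Chen and Cho--Li rather than reproving. One small correction to your final paragraph: the spatial boundary terms are controlled not by Laplacian comparison (which would require a lower Ricci bound that is equally unavailable a priori) but by Perelman's second-variation estimate $(\partial_t-\Delta)\,d_t(x_0,\cdot) \geq -(n-1)\left(\tfrac{2}{3}Kr_0 + r_0^{-1}\right)$ in the barrier sense, valid once $Rc \leq (n-1)K$ holds merely on the fixed small ball $B_t(x_0,r_0)$ about the basepoint — a compact region on which smoothness supplies the bound $K$ — and it is exactly this one-ended localization that keeps the constants independent of the large cutoff radius and breaks the circularity you flag.
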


\begin{rmk} \label{rmk:R>0}
	In the special case of a gradient steady Ricci soliton $(M^n,g,f)$, by \cite[Proposition 3.2]{Petersen-W:09}, either the scalar curvature $R>0$ everywhere or $(M^n,g,f)$ is Ricci-flat.
\end{rmk}

\begin{rmk}
Not only the above result of Chen is very useful, but also Chen's main idea\footnote{Motivated in part by Hamilton's paper \cite[p.2]{Ha:93b} on eternal solutions to the Ricci flow.} of the proof is quite significant and has been widely used in the study of complete ancient solutions; see, e.g., \cite{Li-Ni:20, Cao-Xie, Cho-Li:23}.  Very recently, Cho and Li \cite{Cho-Li:23} have nicely summarized the essential feature of Chen's argument and formulated it as the following general lemma.
\end{rmk}
\begin{lem}  \textup{\bf (Chen's Lemma \cite[Corollary 2.4] {Cho-Li:23})} \label{Chen's lem}
	Let $(M^n,g(t))_{t\in (-\infty,0]}$ be an n-dimensional complete ancient solution to the Ricci flow. 
Suppose $u: M\times (-\infty,0] \rightarrow \R$ is a Lipschitz function and satisfies the differential inequality
\[(\pa_t -\Delta) u\geq \delta u^2\] 
in the barrier sense for some constant $\delta >0$. Then, $u \geq 0$ on $M \times (-\infty,0]$.
\end{lem}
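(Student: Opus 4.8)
The plan is to prove $u\geq 0$ pointwise by establishing, for an arbitrary point $(x_0,t_0)\in M\times(-\infty,0]$ and every $s>0$, the one-sided bound $u(x_0,t_0)\geq -\tfrac{1}{\delta s}$, and then sending $s\to\infty$. The engine is an ODE comparison that exploits the fact that the flow exists for all $t\leq t_0$. The scalar ODE $h'=\delta h^2$ has solutions $h(t)=-\tfrac{1}{\delta(t-t_0+s)}$ which blow down to $-\infty$ as $t\downarrow t_0-s$ while taking the finite value $-\tfrac{1}{\delta s}$ at $t_0$. Heuristically, a spatial minimum of $u$ satisfies $\tfrac{d}{dt}\min_M u\geq \delta\,(\min_M u)^2$ (at such a minimum $\Delta u\geq 0$, so $\pa_t u\geq \delta u^2$), so if $u$ were strictly below $-\tfrac{1}{\delta s}$ at $(x_0,t_0)$ it would have to have emerged from $-\infty$ at a time $>t_0-s$, which is impossible. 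The whole proof is a rigorous, localized implementation of this comparison.

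To make this precise I would fix $s>0$, set $h(t)=-\tfrac{1}{\delta(t-t_0+s)}$ on $(t_0-s,t_0]$, and consider $v:=u-h$. Since $h$ is smooth, depends only on $t$, and solves $h'=\delta h^2$, the hypothesis $(\pa_t-\Delta)u\geq \delta u^2$ transfers, still in the barrier sense, to
\[(\pa_t-\Delta)v\;\geq\;\delta(u^2-h^2)\;=\;\delta(u+h)\,v.\]
The goal is to run the parabolic minimum principle for this linear inequality forward in time. Because $h(t)\to-\infty$ as $t\downarrow t_0-s$, once $u$ is known to be bounded below on the relevant region one may choose a time $s_1$ close to $t_0-s$ at which $u(\cdot,s_1)>h(s_1)$ everywhere, i.e.\ $v(\cdot,s_1)>0$; the minimum principle then propagates $v\geq 0$ up to $t_0$, so that $u(x_0,t_0)\geq h(t_0)=-\tfrac{1}{\delta s}$. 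Letting $s\to\infty$ gives $u(x_0,t_0)\geq 0$.

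The main obstacle is that $M$ need not be compact and no global curvature bound is assumed, so neither the minimum principle nor the lower bound on $u$ needed to start it is automatic; this is precisely the difficulty resolved by Chen's method. I would address it by localizing in space: fixing $x_0$ and a radius $r$, introduce a cutoff $\vphi=\vphi\big(d(x,x_0,t)/r\big)$, decreasing in $d(x,x_0,t)$ and supported in $B_{g(t)}(x_0,2r)$, and carry out the comparison on the compact support of $\vphi$, where $u$ is automatically bounded below and the zeroth-order coefficient $\delta(u+h)$ is bounded. The price is the error produced by $(\pa_t-\Delta)\vphi$, governed by the evolution inequality and the Laplacian comparison for $d(x,x_0,t)$ under the Ricci flow; the heart of Chen's argument is that these terms can be absorbed by the favorable reaction term $\delta u^2$, and that only one-sided (lower) control is ever needed, so that no upper curvature bound enters. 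Since $u$ is merely Lipschitz and the differential inequality holds only in the barrier sense, the maximum principle must be applied in its barrier form, using smooth lower support functions for $u$ at the space-time points where the localized minimum is attained. Sending first $r\to\infty$ to kill the cutoff error and then $s\to\infty$ to exploit ancientness completes the proof. The delicate bookkeeping of the cutoff error terms, together with the verification that completeness plus the quadratic reaction term genuinely substitute for a curvature bound, is the step I expect to demand the most care.
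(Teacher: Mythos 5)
First, a remark on the ground truth: the paper does not prove this lemma at all --- it quotes it from Cho--Li \cite[Corollary 2.4]{Cho-Li:23}, which is a formalization of B.-L. Chen's localization argument from \cite{ChenBL:09}. Your overall architecture is in fact that argument: the ODE barrier $h(t)=-\tfrac{1}{\delta(t-t_0+s)}$ solving $h'=\delta h^2$, localization by a cutoff $\varphi$ in the evolving distance $d_{g(t)}(\cdot,x_0)$, a barrier-sense minimum principle on the compact localized region, absorption of the cutoff errors into the reaction term $\delta u^2$, and the double limit $r\to\infty$ (completeness) then $s\to\infty$ (ancientness). So the skeleton is right and matches the cited proof.

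However, there is a genuine gap at precisely the step you defer, and your one-line description of how it is resolved is incorrect. You assert that ``only one-sided (lower) control is ever needed, so that no upper curvature bound enters.'' The cutoff error contains the term $u\,\varphi'\,(\partial_t-\Delta)d/r$, and to control it one needs a lower bound on $(\partial_t-\Delta)d$; but $\Delta d$ from above via naive Laplacian comparison requires a \emph{lower} Ricci bound along the whole minimal geodesic $\gamma$, while $\partial_t d=-\int_\gamma Rc(\gamma',\gamma')\,ds$ requires an \emph{upper} Ricci bound along $\gamma$ --- and since $u$ is an arbitrary Lipschitz function, the hypothesis $(\partial_t-\Delta)u\geq\delta u^2$ gives no curvature control whatsoever (unlike, say, $u=R$ in dimension three). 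The missing idea --- Chen's, in the form of Perelman's Lemma 8.3 or, equivalently, a second-variation estimate with a test function ramping from $0$ to $1$ only on the initial segment of $\gamma$ --- is that one gets $(\partial_t-\Delta)d \geq -(n-1)\bigl(\tfrac{2}{3}Kr_0+r_0^{-1}\bigr)$ using $Rc\leq (n-1)K$ only on the \emph{fixed} small ball $B_{g(t)}(x_0,r_0)$ about the center, where such a $K=K(r_0,t_1,t_0)<\infty$ exists by smoothness of the flow and compactness, \emph{independently of the cutoff radius} $r$. That independence is exactly what makes the error, of order $|u|\bigl(Kr_0/r+r^{-2}\bigr)$, absorbable into $\delta u^2$ (keep $\tfrac{\delta}{2}u^2$ in reserve --- note your linearization $v=u-h$ as written spends the entire quadratic term) and makes it vanish as $r\to\infty$. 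Two smaller points also need proof rather than assertion: precompactness of $\bigcup_{t\in[t_1,t_0]}B_{g(t)}(x_0,2r)$ over a compact time interval (no global metric comparability is available without curvature bounds, though a local-uniform-equivalence covering argument fixes this), and the barrier-sense application of the minimum principle at the localized minimum, which you do correctly flag. As it stands, your proof would fail, or at least stall, at the distance-function step; with Chen's localization of the curvature hypothesis to the fixed center ball, it becomes the cited proof.
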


\begin{lem} \textup{\bf (\cite[Proposition 3.1]{Cao-Xie})} \label{lem:Cao-Xie}
	Let $(M^4, g(t))$ be a 4-dimensional complete ancient solution to the Ricci flow.
	\begin{enumerate}

		\item[(a)] If $g(t)$ has half nonnegative isotropic curvature, then either $A\geq 0$ or $C\geq 0$.
	
		\item[(b)] If $g(t)$ has half positive isotropic curvature, then either $A > 0$ or $C > 0$.  
	\end{enumerate}

\end{lem}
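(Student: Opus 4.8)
The plan is to apply Chen's Lemma (Lemma \ref{Chen's lem}) to the lowest eigenvalues $A_1$ and $C_1$ of the self-dual and anti-self-dual blocks of $Rm$, using the half-WPIC hypothesis to force the reaction term in each evolution equation to dominate a positive multiple of the square of the eigenvalue. Throughout I would use that an ancient solution has $R\ge0$ (Lemma \ref{lem:Chen}), so that $\tr A=\tr C=R/4\ge0$.

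First I would record the evolution inequality for the lowest eigenvalue. From $\pa_t A=\Delta A+2(A^2+2A^{\sharp}+B\,{}^tB)$ (Lemma \ref{lem:RFequations}) and Hamilton's eigenvalue estimate, the lowest eigenvalue $A_1$ satisfies, in the barrier sense,
\[(\pa_t-\Delta)A_1\ge 2\bigl(A_1^2+2A_2A_3+(B\,{}^tB)(v_1,v_1)\bigr)\ge 2\bigl(A_1^2+2A_2A_3\bigr),\]
where $v_1$ is a unit lowest eigenvector of $A$; here $A^{\sharp}$ acts as $A_2A_3$ on $v_1$ and $B\,{}^tB\succeq0$. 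The analogous inequality holds for $C_1$ with $2A_2A_3$ replaced by $2C_2C_3$. The role of half WPIC is the following elementary observation: on the set $\{A_1+A_2\ge0\}$, whenever $A_1<0$ one has $A_2\ge-A_1>0$ and $A_3\ge A_2>0$, hence $A_2A_3\ge A_1^2$ and therefore
\[(\pa_t-\Delta)A_1\ge 6A_1^2\qquad\text{on }\{A_1+A_2\ge0\},\]
and symmetrically $(\pa_t-\Delta)C_1\ge 6C_1^2$ on $\{C_1+C_2\ge0\}$.

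Next I would reduce the pointwise disjunction furnished by half WPIC to a \emph{global} dichotomy. Writing $G_A=\{A_1+A_2\ge0\}$ and $G_C=\{C_1+C_2\ge0\}$, half WPIC says exactly that $G_A\cup G_C=M\times(-\infty,0]$, equivalently that the two open bad sets $G_A^{\,c}$ and $G_C^{\,c}$ are disjoint. A direct check of the reaction term for the sum of the two lowest eigenvalues shows that each cone $\{A_1+A_2\ge0\}$ and $\{C_1+C_2\ge0\}$ is preserved forward in time; combining this with the disjointness of the bad sets and the ancient structure (letting the initial time tend to $-\infty$), I would argue that one of the two bad sets must be empty, i.e. either $A_1+A_2\ge0$ on all of $M\times(-\infty,0]$ or $C_1+C_2\ge0$ on all of it. Granting this, say $A_1+A_2\ge0$ everywhere, the second display holds globally and Chen's Lemma (with $\delta=6$) applied to $u=A_1$ yields $A_1\ge0$, that is $A\ge0$; this proves part (a). For part (b), under half PIC the reaction estimates are strict, and Hamilton's strong maximum principle upgrades $A\ge0$ to $A>0$ (respectively $C>0$), giving the strict dichotomy.

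The hard part will be the global dichotomy step. Half WPIC is only a pointwise disjunction, and a priori the ``good half'' could switch from $A$ to $C$ as one moves across $M$; indeed one cannot apply Chen's Lemma to $\min(A_1,C_1)$ or $\max(A_1,C_1)$, since in the region where $A$ is strictly good but $C_1+C_2<0$ (or vice versa) the reaction term of the eigenvalue realizing the relevant extremum is not controlled. Converting the disjointness of the two bad sets into the statement that one of them is globally empty is the crux. I expect this to require the forward-in-time preservation of each half-cone together with a connectedness / strong-maximum-principle argument that exploits the fact that on each bad set the complementary block is strictly controlled, so that Chen's Lemma can in the end be applied to a single block on the whole ancient spacetime.
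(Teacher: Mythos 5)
Your first two paragraphs already are, in essence, the paper's proof (which it imports from \cite[Proposition 3.1]{Cao-Xie} and revisits as Proposition~\ref{pro:positiveC1}): the barrier-sense evolution inequality $(\partial_t-\Delta)A_1\ge 2\bigl(A_1^2+2A_2A_3+(B\,{}^tB)(v_1,v_1)\bigr)$, discarding $B\,{}^tB\succeq 0$, followed by Chen's Lemma (Lemma~\ref{Chen's lem}). One refinement worth noting: you do not need the sharper bound $A_2A_3\ge A_1^2$. On the $A$-side, $A_1+A_2\ge 0$ forces $A_2\ge 0$ (since $A_1+A_2\le 2A_2$), hence $A_3\ge A_2\ge 0$ and $A_2A_3\ge 0$, so $(\partial_t-\Delta)A_1\ge 2A_1^2$ holds outright and Chen's Lemma applies with $\delta=2$. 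This weaker input is precisely what allows the paper to relax the hypothesis to $A_2\ge 0$ or $C_2\ge 0$ in Proposition~\ref{pro:positiveC1} (see the remark following it); your reliance on $A_2\ge -A_1$ is correct but obscures that generalization. Your sketch of (b) also matches the paper's route: given $A\ge 0$ from (a), a zero of $A_1$ propagates by the strong maximum principle and forces $A_2A_3=0$, i.e.\ $A_2=0$, contradicting $A_1+A_2>0$.

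The ``hard part'' you single out, however, is a misreading of the hypothesis rather than a gap in the paper. Half (W)PIC here is a \emph{global} condition: one fixed block --- without loss of generality $A$, after reversing the orientation if necessary --- is 2-nonnegative on all of $M\times(-\infty,0]$. It is not the pointwise disjunction $G_A\cup G_C=M\times(-\infty,0]$, so no pointwise-to-global dichotomy is needed, and your first display applies at once to the single block $A$ on the whole ancient spacetime; this is why the conclusion reads ``either $A\ge 0$ or $C\ge 0$.'' Had the hypothesis genuinely been pointwise, your proposed mechanism would not close the gap: preservation of the cone $\{A_1+A_2\ge 0\}$ under the flow is a statement about solutions lying entirely in the cone at some initial time, not a localization principle preventing the good set from shrinking or switching sides, and (as you correctly observe) neither $\min(A_1,C_1)$ nor $\max(A_1,C_1)$ satisfies a usable differential inequality. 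So the last paragraph of your proposal should simply be deleted: with the hypothesis read as intended, your argument is complete and coincides with the paper's, up to the inessential constant $\delta=6$ versus $\delta=2$.
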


\begin{rmk} 
	We point out that Lemma \ref{lem:Cao-Xie} is valid under some slightly weaker assumptions; see Proposition \ref{pro:positiveC1} for details. 
\end{rmk}

\medskip
\section{4D Gradient Steady Ricci solitons with (half) WPIC}

In this section, we study the geometry and classification of 4-dimensional complete gradient steady Ricci solitons, satisfying
\begin{equation}  \label{eq:steader}
	Rc + \na^2 f= 0,
\end{equation}
with WPIC or half WPIC. By Lemma \ref{lem:Chen} and scaling the metric $g$, if necessary, we can normalize the potential function $f$ so that 
\begin{equation}  \label{eq:steadernormalized_f}
	R+|\nabla f|^2=1. 
\end{equation}

First of all, we prove {\bf Theorem \ref{thm:4Dancient}} for 4-dimensional complete ancient solutions with WPIC which is restated as 

\begin{thm} \label{prop:4Dancient}
	Let $(M^4,g(t))$ be a 4-dimensional complete ancient solution to the Ricci flow.
	\begin{itemize}
		\item [(a)] If $g(t)$ has nonnegative isotropic curvature, then it has 2-nonnegative Ricci curvature. Moreover, it satisfies the curvature estimate $$|Rm| \leq R.$$
		
		\item [(b)] If $g(t)$ has positive isotropic curvature, then it has 2-positive Ricci curvature.
	\end{itemize}
\end{thm}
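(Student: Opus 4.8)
The plan is to obtain $Rc\geq 0$ by a two–stage application of Chen's Lemma (Lemma~\ref{Chen's lem}), applied first to the diagonal curvature blocks $A,C$ and then to the Ricci tensor itself. Throughout one uses that nonnegative isotropic curvature is preserved by the Ricci flow, so that \eqref{eq:WPIC} — equivalently $A_1+A_2\geq 0$ and $C_1+C_2\geq 0$ for the blocks in \eqref{eq:CODecomposition} — holds for all $t\leq 0$, together with $R\geq 0$ from Lemma~\ref{lem:Chen}.

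First I would show $A\geq 0$ and $C\geq 0$. For the anti–self–dual block, let $C_1$ be its smallest eigenvalue with unit eigenvector $\psi$. From $\pa_t C=\Delta C+2(C^2+2C^{\#}+{}^{t}BB)$ in Lemma~\ref{lem:RFequations}, Hamilton's trick for the lowest eigenvalue gives, in the barrier sense,
\[(\pa_t-\Delta)C_1\geq 2\big(C_1^2+2C_2C_3+|B\psi|^2\big)\geq 2C_1^2,\]
where $|B\psi|^2\geq 0$ and WPIC forces $C_2\geq 0$ (hence $C_2C_3\geq 0$), since $C_1+C_2\geq 0$ and $C_1\leq C_2$. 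Chen's Lemma then yields $C_1\geq 0$, i.e. $C\geq 0$; the same argument applied to $\pa_t A$ gives $A\geq 0$. This is the mechanism behind Lemma~\ref{lem:Cao-Xie}, now strengthened to \emph{both} blocks under the full WPIC hypothesis.

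The main step is to upgrade this to $Rc\geq 0$. Working in an evolving orthonormal frame (Uhlenbeck's trick), the least Ricci eigenvalue $\Lambda_1$ satisfies, in the barrier sense,
\[(\pa_t-\Delta)\Lambda_1\geq 2\sum_{k\neq 1}R_{1k1k}\Lambda_k=2\Lambda_1^2+2\sum_{k\neq 1}R_{1k1k}(\Lambda_k-\Lambda_1),\]
after using $\sum_k R_{1k1k}=\Lambda_1$. One would like to deduce a Chen–type inequality $(\pa_t-\Delta)\Lambda_1\geq\delta\Lambda_1^2$ and conclude $\Lambda_1\geq 0$. The crucial point — and the step I expect to be the real obstacle — is that the cross term $\sum_{k\neq 1}R_{1k1k}(\Lambda_k-\Lambda_1)$ need not be nonnegative, and indeed $Rc\geq 0$ is \emph{not} a pointwise consequence of WPIC: a conformally flat operator with $R>0$ is even PIC (since then $A=C=\tfrac{R}{12}I$ and $A_1+A_2=R/6>0$), yet it can have a negative Ricci eigenvalue, so $A,C\geq 0$ alone does not force $Rc\geq 0$. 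Hence the estimate must genuinely use the flow: one has to control the off–diagonal block $B=\mathring{Rc}$ through its evolution $\pa_t B=\Delta B+2(AB+BC+2B^{\#})$ together with the nonnegativity of $A$ and $C$ just established — effectively a preserved pinching of $B$ against $R$ that survives on an ancient solution. This is where the interplay of the three equations in Lemma~\ref{lem:RFequations} and the pointwise inequality $\Lambda_i+\Lambda_j\geq 2R_{ijij}$ of Lemma~\ref{lem:WPIC}(c) must be used delicately.

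Granting $Rc\geq 0$, the curvature bound $|Rm|\leq R$ reduces to a pointwise algebraic estimate for four–dimensional curvature operators satisfying WPIC with $Rc\geq 0$: with $A,C\geq 0$, $\tr A=\tr C=R/4$, and $0\leq\Lambda_i\leq R$, the eigenvalues of the operator in \eqref{eq:CODecomposition} are controlled by $R$. Finally, for part (b) I would carry out the same arguments with strict inequalities — PIC gives $A,C>0$ — and then invoke Hamilton's strong maximum principle for the evolving Ricci tensor: if $Rc\geq 0$ but degenerate, its kernel is parallel and time–invariant, forcing a local splitting, which is incompatible with positive isotropic curvature; hence $Rc>0$.
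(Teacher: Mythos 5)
Your first stage ($A\geq 0$ and $C\geq 0$ via Chen's Lemma applied to the smallest eigenvalue of each diagonal block) is correct and is exactly the content of Lemma \ref{lem:Cao-Xie}, which the paper simply invokes. The algebra for $|Rm|\leq R$ once $A,C\geq 0$ and $Rc\geq 0$ are in hand is also essentially the paper's: $|Rm|^2\leq 2(|A|^2+|C|^2+|B|^2)$ with $|A|^2,|C|^2\leq (\tr A)^2=R^2/16$ and $4|B|^2=|\mathring{Rc}|^2\leq \tfrac34 R^2$ from $|Rc|^2\leq R^2$.

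The genuine gap is the central claim $Rc\geq 0$. You correctly diagnose that it is not a pointwise consequence of $A,C\geq 0$, and that your inequality $(\pa_t-\Delta)\Lambda_1\geq 2\Lambda_1^2+2\sum_{k\neq 1}R_{1k1k}(\Lambda_k-\Lambda_1)$ stalls because WPIC does not control the individual sectional curvatures $R_{1k1k}$; but you then only gesture at ``a preserved pinching of $B$ against $R$'' without producing one, so the key step is not proved. The paper closes it by a different route: it never uses the reaction term of the Ricci tensor, but instead writes $4\Lambda_1=u:=R+4B_1-4(B_2+B_3)$ via Lemma \ref{lem:RcandtracelessRc} and differentiates $u$ directly from the evolution equations of $R$ and of the off-diagonal block $B$ in Lemma \ref{lem:RFequations}, using $2|Rc|^2=\tfrac12 R^2+8|B|^2$ and the reaction $2(AB+BC+2B^{\sharp})$ evaluated on the singular values $B_1\leq B_2\leq B_3$. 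After substituting $R=4(A_1+A_2+A_3)=4(C_1+C_2+C_3)$ one obtains
\[ \frac{d}{dt}u \;\geq\; \tfrac12 u^2 + 8(B_2-B_1)(A_3+C_3)+8(B_3-B_1)(A_2+C_2)+8(B_2+B_3)(A_1+C_1), \]
and the remainder is nonnegative precisely because $A\geq 0$, $C\geq 0$ and the $B_i$ are ordered; Chen's Lemma then gives $u\geq 0$. This pairing of the ordered eigenvalues of $A+C$ against the ordered singular values of $B$, so that the error term acquires a sign, is the idea your proposal is missing. Separately, your argument for part (b) (``a parallel kernel of $Rc$ forces a local splitting, which is incompatible with PIC'') does not work as stated, since products such as $\rS^3\times\R$ split and still have PIC; the paper instead argues that at an interior zero of $\Lambda_1$ the remainder displayed above must vanish, which under $A>0$ and $C>0$ forces $B_1=B_2=B_3=0$ and hence $\Lambda_1=R/4>0$, a contradiction.
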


\begin{proof}
(a) We first prove that $g(t)$ has 2-nonnegative Ricci curvature. By Lemma \ref{lem:RcandtracelessRc}, the sum of the least two eigenvalues of $Rc$ is given by
\[ \Lambda_1+\Lambda_2= \frac 1 2 (R-4B_3).\]
Thus, 2-nonnegative Ricci curvature is equivalent to  $u:=R-4B_3 \geq0.$

Now, we consider the ordinary differential inequality satisfied by the Lipschitz function $u$ in the barrier sense. By Lemma \ref{lem:RFequations} and $|\mathring{Rc}|^2=4|B|^2$, we have
\begin{equation*}
	\begin{split}
		\frac{d}{dt}u &\geq 2|Rc|^2 - 8(A_3B_3+C_3B_3+2B_1B_2) \\
		&=\frac{1}{2}R^2+8|B|^2 - 8(A_3B_3+C_3B_3+2B_1B_2) \\
		&=\frac{1}{2}\left( R-4B_3\right)^2+ 4RB_3 +8(B_2-B_1)^2 - 8(A_3+C_3)B_3 \\
		&\geq \frac{1}{2}u^2+ 4(R-2A_3-2C_3)B_3.
	\end{split}
\end{equation*}
On the other hand, since $A_1+A_2+A_3=C_1+C_2+C_3=\frac{R}{4}$, we have 
\[ R-2A_3-2C_3=2(A_1+A_2+C_1+C_2). \]
Hence, we obtain
\begin{equation} \label{eq:ODEofu}
	\begin{split}
		\frac{d}{dt}u & \geq \frac{1}{2}u^2 + 8(A_1+A_2 +C_1+C_2)B_3 \\
               & \geq \frac{1}{2}u^2,
	\end{split}
\end{equation}
where, in the last inequality, we have used the assumption of WPIC, i.e., $A_1+A_2\geq 0$ and $C_1+C_2\geq 0$. Hence, by Lemma \ref{Chen's lem}, we have $R-4B_3\ge 0$, i.e.,  $Rc$ is 2-nonnegative.

Next, we prove the curvature estimate $|Rm|\leq  R$. By Lemma \ref{lem:Cao-Xie}, we have $A\ge 0$ and $C\ge 0$. Also, we know that $\tr A=\tr C=R/4.$ Hence, $|A|^2 \le \tr(A)^2 = \frac {1}{16}{R^2}$. Similarly, we have $|C|^2\leq \frac {1}{16}{R^2}$. Since the Ricci tensor of $(M^4, g, f)$ is 2-nonnegative, it is not hard to see that $|Rc|^2\leq R^2$. Indeed, let $\Lambda_1\le \Lambda_2\le \Lambda_3\le \Lambda_4$ be the eigenvalues of $Rc$, then, as $ \Lambda_1+\Lambda_2\geq 0$, we obtain
\begin{equation*}
	\begin{split}
		|Rc|^2 &  = \sum_{i=1}^4 \Lambda^2_i  \leq 2\Lambda_2^2+ \Lambda_3^2+ \Lambda_4^2 \leq (\Lambda_3+\Lambda_4)^2 \\
		& \leq  \left(\sum_{i=1}^4 \Lambda_i\right)^2= R^2.
	\end{split} 
\end{equation*}
On the other hand, 
\[|Rc|^2 = |\mathring{Rc}|^2 + \frac{1}{4}R^2 = 4|B|^2 +\frac{1}{4}R^2.\]
Therefore, $4|B|^2 +\frac{1}{4}R^2\leq R^2$ and 
\begin{equation} \label{eq:boundedRm}
	\begin{split}
		|Rm|^2 & \leq 2(|A|^2 + |C|^2 + |B|^2) \\
		& \leq \frac 1 4 R^2+ 2|B|^2 \leq \frac 5 8 R^2.
	\end{split} 
\end{equation}
This completes the proof of curvature bound.
	
\smallskip
(b) Suppose that $g(t)$ has PIC. We prove 2-positive Ricci curvature by contradiction. We consider the quadratic form $Z:=RI-4\sqrt{B {}^tB}$, where $I$ is the 3 by 3 identity matrix. By part (a), we know that $Z\geq 0$ and that  2-positive Ricci curvature is equivalent to $Z>0$. Now, we denote the eigenvalues of $Z$ by
\begin{equation}
	0\leq Z_1\leq Z_2\leq Z_3.
\end{equation}
Assume that $Z$ has a null eigenvector at some space-time point $(x_0, t_0)$. Then, by using a similar argument as in the proof of \cite[Proposition 3.1(b)]{Cao-Xie}, we get a contradiction. Alternatively, by a standard strong maximum principle argument (cf. \cite{Ha:93b}), it follows from (\ref{eq:ODEofu}) that if $Z_1=0$ attains its minimum at $(x_0, t_0)$, we must have
\begin{equation} \label{eq:equationofu}
	A_1+A_2+C_1+C_2 =0
\end{equation}
at $(x_0,t_0)$. 
On the other hand, by Lemma \ref{lem:Cao-Xie} (b), we know that $A> 0$ and $C> 0$. This is a contradiction. 
Hence, the Ricci tensor is 2-positive on $M^4\times (-\infty,0]$.

This completes the proof of Theorem \ref{prop:4Dancient}  (i.e., Theorem \ref{thm:4Dancient}).
\end{proof}

\begin{rmk}
	For $n\geq 5$, Li and Ni \cite{Li-Ni:20} proved that $n$-dimensional complete ancient solutions to the Ricci flow with WPIC must have 2-nonnegative Ricci curvature; see \cite[Proposition 5.2]{Li-Ni:20}. However, their proof does not extend to the $n=4$ case, since the key inequality \cite[Lemma 4.1]{Li-Ni:20} in the proof holds only for $n\geq 5$.
\end{rmk}

\medskip
{\bf Proof of Theorem \ref{thm:WPIC}.} Let $(M^4, g, f)$ be a $4$-dimensional complete noncompact, non-flat, gradient steady Ricci soliton with nonnegative isotropic curvature.

First of all, we claim that if the holonomy group $\text{Hol}^{0} (M^4, g)$ is $\text{SO}(4)$ then $(M^4, g, f)$ has PIC. Indeed, consider  the canonical Ricci flow $g(t)=\Phi(t)^{\ast}(g)$ induced by the steady soliton $(M^n,g,f)$, with $g(0)=g$, for $t\in [0,1]$. Here, $\Phi(t)$ is the one-parameter family of diffeomorphisms generated by the vector field $\nabla f$. By \cite[Proposition 8]{Brendle-S:08}, for any fixed time $t \in (0,1)$, the set of all orthonormal four-frames $\{e_1,e_2,e_3,e_4\}$ on $(M^4,g(t))$ satisfy
\begin{equation} \label{eq:WPIC=0}
	R_{1313}+R_{1414}+R_{2323}+R_{2424}-2R_{1234}=0
\end{equation}
is invariant under parallel transport with respect to the metric $g(t)$. Moreover,  as $g(t)=\Phi(t)^{\ast}(g)$ with $g(0)=g$, we conclude that all orthonormal four-frames $\{e_1,e_2,e_3,e_4\}$ on $(M^n,g,f)$ satisfying (\ref{eq:WPIC=0}) is also invariant under parallel translation with respect to $g$. Thus, by \cite[Corollary 9.14]{Brendle:10b}, if the holonomy group $\text{Hol}^{0} (M^4, g)$ is $\text{SO}(4)$ then $(M^4, g, f)$ has PIC.
	
Now, we are ready to conclude our proof. 

\smallskip
{\bf Case 1:} $(M^4,g,f)$ is locally reducible.  In this case, since $(M^4, g, f)$ is non-flat, its universal cover must be $N^3 \times \R$, or $\Sigma \times \Sigma$, or $\Sigma \times \R^2$, where $\Sigma^2$ is the cigar soliton and $N^3$ is either the Bryant soliton or a flying wing. 

\smallskip
{\bf Case 2:} $(M^4,g,f)$ is locally irreducible and locally symmetric. Then, as $(M^4,g,f)$ is a steady Ricci soliton it must be Ricci flat. Together with the third inequality in Lemma \ref{lem:WPIC},  or by \cite[Theorem 7.61]{Besse}, this would imply $(M^4,g,f)$ is flat.
So, Case 2 cannot occur when $(M^4,g,f)$ is assumed non-flat. 
\smallskip
	
{\bf Case 3:} $(M^4,g,f)$ is locally irreducible and not locally symmetric.  Then, by Berger's holonomy classification, either $\text{Hol}^{0} (M^4, g)=\text{SO}(4)$, or $\text{Hol}^{0} (M^4, g)=\text{U}(2)$, or $\text{Hol}^{0} (M^4, g)=\text{SU}(2)$. If $\text{Hol}^{0} (M^4, g)=\text{SO}(4)$ then, from the above, we know that $(M^4,g,f)$ must have PIC and 2-positive Ricci curvature. On the other hand, if $\text{Hol}^{0} (M^4, g)=\text{SU}(2)$ then $(M^4,g,f)$ is Calabi-Yau, hence flat due to WPIC. But this is ruled out by the non-flatness assumption.  Finally, if $\text{Hol}^{0} (M^4, g)=\text{U}(2)$, then $(M^4,g)$ is K\"ahler, hence $(M^4, g, f)$ is a gradient steady K\"ahler-Ricci soliton. On the other hand, any gradient steady K\"ahler-Ricci soliton of complex dimension two with WPIC necessarily has nonnegative curvature operator; see, e.g., \cite[Lemma 4.6]{Cho-Li:23}. Moreover, since $(M^4,g,f)$ is locally irreducible, not locally symmetric and has $\text{Hol}^{0} (M^4, g)=\text{U}(2)$, by using \cite[Theorem 8.3]{Ha:86} and a similar argument as in \cite{Cao-Chow:86}, it follows that $(M, g)$ has $Rm>0$ when restricted to real $(1, 1)$-forms $\wedge^{1,1}_{\R} (M)$. 
	This completes the proof of Theorem \ref{thm:WPIC}.
\hfill $\square$

\medskip
Finally, we restate and prove Theorem \ref{thm:halfWPIC}. 

\begin{thm} \label{prop:halfWPIC}
	Let $(M^4, g, f)$ be a 4-dimensional complete, non-flat, gradient steady Ricci soliton with half nonnegative isotropic curvature. Then, either
	\begin{itemize}
		\item[(i)] $(M^4, g, f)$ has half positive isotropic curvature, or
		\smallskip
		\item[(ii)] $(M^4, g, f)$ is Calabi-Yau, hence locally hyperK\"ahler, or
		\smallskip
		\item[(iii)] $(M^4, g, f)$ is Ricci-flat and self-dual, or                        
		\smallskip		
		\item[(iv)] $(M^4, g, f)$ is a locally irreducible, non-Ricci-flat, gradient steady K\"ahler-Ricci soliton, or
		\smallskip	
		\item[(v)] $(M^4, g, f)$ is isometric to a quotient of either $N^3\times \R$, or $\Sigma^2\times \Sigma^2$, or $\Sigma^2\times \R^2$. Here $\Sigma^2$ is the cigar soliton, and $N^3$ is either the Bryant soliton or a flying wing.
	\end{itemize}
\end{thm}

\begin{proof}
	
	First of all,  by  Lemma \ref{lem:Cao-Xie}, $(M^4,g,f)$ has either $A\geq 0$ or $C\geq 0$. Without loss of generality, we may assume $A\geq 0$.
	
	Next, we observe that $\ker (A)$ is invariant under parallel translation. Indeed, as in the proof of Theorem \ref{thm:WPIC}, we consider the canonical Ricci flow $g(t)=\Phi(t)^{\ast}(g)$ induced by the steady soliton $(M^n,g,f)$, with $g(0)=g$, for $t\in [0,1]$. Note that, by Lemma \ref{lem:Hamilton}, the evolution equation of the matrix $A(t)$ for $g(t)$ is given by
	\begin{equation*}
		\pa_t A = \Delta A + 2(A^2 +2A^{\sharp} +B{}^{t}B).
	\end{equation*}
	Since $A(t)\geq 0$, it follows that 
	\[ A(t)^2 +2A(t)^{\sharp} +B(t) {}^tB(t)\geq 0, \qquad  {\mbox{for all}}   \ t\in[0, 1].\] 
	Thus, by Hamilton's strong maximum principle (see, e.g., \cite[Theorem 2.2.1]{Cao-Zhu:06}), there exists an interval $0<t<\delta$ over which the rank of $A(t)$ is constant, and $\ker (A(t))$ is invariant under parallel translation and invariant in time. As $g(t)=\Phi(t)^{\ast}(g)$ with $g(0)=g$, we conclude that $\ker(A)=\ker(A(0))$ is also invariant under parallel translation for $(M^n,g,f)$. 
	
	\smallskip
	\noindent {\bf Claim:} If $(M^4, g, f)$ has half WPIC and the restricted holonomy group $\text{Hol}^{0} (M^4, g)$ is $\text{SO}(4)$, then either it is Ricci flat, or $A > 0$ hence $(M^4, g, f)$ has half PIC.

 	\smallskip
	
	We essentially follow the same argument as in the proof of \cite[Theorem 1.2]{Cao-Xie} and argue by contradiction. Suppose that $(M^4, g, f)$ is not Ricci flat and there exist a point $p\in M^4$ and a self-dual $\vphi_1 \in \wedge^+(M_p)$ such that
	$ A (\vphi_1,\vphi_1) = 0. $
	It is then clear that $\vphi_1$ is a null eigenvector corresponding to the smallest eigenvalue $A_1=0$ at $p$. Now, by \cite[Lemma 6.1]{Derdzinski:00}, the self-dual $\vphi_1 \in \wedge^+(M_p)$ can be expressed as 
	\begin{equation*} \label{eq:vphi1}
		\vphi_1 = \frac{1}{\sqrt{2}}\left( e_1\wedge e_2 + e_3\wedge e_4\right)
	\end{equation*}
	for some positively oriented orthonormal frame $\{e_1, e_2, e_3, e_4\}$.
	Meanwhile, suppose $\vphi_3\in \wedge^+(M_p)$ is an eigenvector corresponding to the largest eigenvalue $A_3$ at $p$. Then, using \cite[Lemma 6.1]{Derdzinski:00} again, we can find another positively oriented orthonormal frame $\{v_1, v_2, v_3, v_4\}$ such that
	\begin{equation*} \label{eq:vphi3}
		\vphi_3 = \frac{1}{\sqrt{2}}\left( v_1\wedge v_2 + v_3\wedge v_4\right). 
	\end{equation*}		
	Since $\text{Hol}^{0} (M^4, g)=\text{SO}(4)$, there exists a closed loop $\gamma$ based at $p$ such that 
	$$ v_i = P_{\gamma}e_i,\quad i=1, \cdots, 4,  $$
	where $P_{\gamma}$ denotes the parallel transport along $\gamma$. It then follows that 
$$ A_3=A(\vphi_3,\vphi_3) = A(\vphi_1,\vphi_1)=A_1=0,$$ since $\ker (A)$ is invariant under parallel translation. This would imply that the scalar curvature $R=4(A_1+A_2+A_3)=0$ at $p$. By Remark \ref{rmk:R>0}, $(M^4,g,f)$ must be Ricci flat, which is a contradiction. Thus, $A > 0$. Therefore,  $(M^4, g, f)$ has PIC and the Claim is proved.

	Now, we are ready to finish the proof of Theorem \ref{prop:halfWPIC}

\smallskip
{\bf Case 1:} $(M^4,g,f)$ is locally reducible.  In this case, since $(M^4, g, f)$ is non-flat, its universal cover must be $N^3 \times \R$, or $\Sigma \times \Sigma$, or $\Sigma \times \R^2$, where $\Sigma^2$ is the cigar soliton and $N^3$ is either the Bryant soliton or a flying wing. 

\smallskip
{\bf Case 2:} $(M^4,g,f)$ is locally irreducible and locally symmetric. Then, $(M^4,g,f)$ must be Ricci flat. However, Ricci-flat symmetric space is necessarily flat (cf. \cite[Theorem 7.61]{Besse}).  So Case 2 doesn't occur since $(M^4,g,f)$ is non-flat by assumption. 

\smallskip	
{\bf Case 3:} $(M^4,g,f)$ is locally irreducible and non-symmetric.  Then, by Berger's holonomy classification, either $\text{Hol}^{0} (M^4, g)=\text{SO}(4)$, or $\text{Hol}^{0} (M^4, g)=\text{U}(2)$, or $\text{Hol}^{0} (M^4, g)=\text{SU}(2)$. If $\text{Hol}^{0} (M^4, g)=\text{SO}(4)$ then, from the {\bf Claim} above, $(M^4,g,f)$ either is Ricci flat or has half PIC. If it is Ricci flat, then it implies that $A=W^{+}$ and $C=W^{-}$ because scalar curvature $R=0$. However, by Lemma \ref{lem:Cao-Xie}, we also have either  $A\geq 0$ or $C\geq 0$. Moreover, $\tr W^+=\tr W^-=0$. Hence, it follow that either $W^{+}=0$ or $W^-=0$, i.e., $(M^4,g,f)$ is either anti-self-dual (ASD) or self-dual (SD). Moreover, it is well-known that Ricci-flat and ASD imply that the universal cover is hyperK\"ahler. 
On the other hand, if $\text{Hol}^{0} (M^4, g)=\text{U}(2)$, then $(M^4,g, f)$ is a gradient steady K\"ahler-Ricci soliton. Finally, if $\text{Hol}^{0} (M^4, g)=\text{SU}(2)$ then $(M^4,g,f)$ is Calabi-Yau, and its universal cover is hyperK\"ahler.

This finishes the proof of Theorem \ref{prop:halfWPIC} (Theorem \ref{thm:halfWPIC}).
\end{proof}

\begin{rmk} As mentioned in the introduction, Theorem \ref{thm:WPIC} and Theorem \ref{thm:halfWPIC} are valid under some slightly weaker assumption than WPIC (or half WPIC); see Remark  \ref{rmk:improving}.  
\end{rmk}

\section {4D gradient Expanding Ricci solitons with WPIC}
In this section, we study 4-dimensional complete gradient expanding Ricci solitons with WPIC and prove Theorem \ref{thm:WPICexpander}. 

By scaling the metric $g$ in Eq.(\ref{eq:Riccisoliton}), we may assume $\rho=-\frac{1}{2}$ so that the expanding soliton equation reduces to 
\begin{equation}  \label{eq:expander}
	Rc+ \na^2 f= -\frac{1}{2}g.
\end{equation}
By replacing $f$ by $f-C_0$ in Lemma \ref{lem:Hamilton}, we can also normalize the potential function $f$ so that
\begin{equation}  \label{eq:expandernormalized_f}
	R+|\nabla f|^2=-f.
\end{equation}

For the reader's convenience, we restate Theorem \ref{thm:WPICexpander} here with more explicit part (iii).

\begin{thm} \label{thm:expander}
	Let $(M^4, g, f)$ be a 4-dimensional complete noncompact, non-flat, gradient expanding Ricci soliton with nonnegative isotropic curvature. Then, either
	\begin{itemize}
		\item[(i)] $(M^4, g, f)$ has positive isotropic curvature, or
	\smallskip	
		\item[(ii)] $(M^4, g, f)$ is a locally irreducible gradient expanding K\"ahler-Ricci soliton (with WPIC), or
	\smallskip	
		\item[(iii)] $(M^4, g, f)$ is isometric to a quotient of either $N^3\times \R$, where $N^3$ is a 3-dimensional expanding Ricci soliton with $Rc> 0$, or $(\R^2,g_6(\nu)) \times \R^2$, or $(\R^2,g_6(\nu)) \times (\R^2,g_6(\nu))$, or $(\R^2,g_6(\nu)) \times (\R^2,g_7(\nu))$, or $(\R^2,g_6(\nu)) \times (\R_{\ast}^2,g_8(\nu))$, where $(\R^2,g_6(\nu))$, $(\R^2,g_7(\nu))$, and $(\R_{\ast}^2,g_8(\nu))$ are the 2D complete gradient expanding Ricci solitions in \cite[Theorem 1]{Bernstein-M:15}; see also \cite{Ramos:18}.
	\end{itemize}
\end{thm}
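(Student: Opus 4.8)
The plan is to mimic the holonomy-classification scheme used for Theorems \ref{thm:WPIC} and \ref{thm:halfWPIC}, but with one essential modification forced by the expanding case. The canonical Ricci flow attached to the soliton is $g(t)=(1+t)\Phi^\ast(t)(g)$ (taking $\rho=-\tfrac12$), which exists only for $t>-1$ and is therefore \emph{not} ancient. Consequently the two engines that drove the steady and shrinking arguments --- Chen's Lemma (Lemma \ref{Chen's lem}) and, above all, Lemma \ref{lem:Cao-Xie}, which upgraded WPIC to the full nonnegativity $A\ge 0$, $C\ge 0$ --- are no longer at our disposal, and we cannot expect $Rc\ge 0$ (this is why (i) only asserts PIC and why the factors in (iii) may carry mixed-sign curvature). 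I would therefore work directly with the information WPIC does give, namely the $2$-nonnegativity $A_1+A_2\ge 0$ and $C_1+C_2\ge 0$, which by the preservation results recalled in Section 6 (together with the evolution equations of Lemma \ref{lem:RFequations}) is preserved along $g(t)$ and, by self-similarity, holds at every time.

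Next I would establish the basic dichotomy. Because the flow is self-similar, on the soliton itself the reaction-diffusion equation for $A$ in Lemma \ref{lem:RFequations} becomes a (degenerate) elliptic equation once the time derivative is absorbed into the Lie derivative along $\na f$. If $(M^4,g,f)$ does not have PIC, then the $2$-nonnegativity is saturated, i.e. $A_1+A_2=0$ somewhere (or $C_1+C_2=0$). Here I would invoke the Bony-type strong maximum principle for degenerate elliptic equations of Brendle--Schoen \cite{Brendle-S:08}, applied to the function measuring the distance to the boundary of the invariant cone $\{A_1+A_2\ge 0\}$ (respectively $\{C_1+C_2\ge 0\}$): saturation at one point propagates to an open set and produces a nontrivial parallel distribution inside $\wedge^+(M)$ (respectively $\wedge^-(M)$), associated to the degenerate eigendirections. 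This parallel subbundle is precisely the mechanism that reduces the restricted holonomy group $\text{Hol}^0(M^4,g)$, and it plays the role that the parallel invariance of $\ker A$ played in the proof of Theorem \ref{prop:WPIC}.

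With the holonomy reduction in hand, I would run Berger's classification exactly as in Theorem \ref{prop:WPIC}. In the locally irreducible, non-symmetric case the possibilities are $\text{Hol}^0=\text{SO}(4)$, $\text{U}(2)$, or $\text{SU}(2)$. If $\text{Hol}^0=\text{SO}(4)$, then $\wedge^\pm(M)$ admit no proper parallel subbundle, so by the previous paragraph $A$ and $C$ are $2$-positive and $(M^4,g,f)$ has PIC, which is (i). If $\text{Hol}^0=\text{U}(2)$ then $g$ is K\"ahler and we land in (ii). If $\text{Hol}^0=\text{SU}(2)$ then $(M^4,g,f)$ is Calabi-Yau, hence Ricci-flat; but then the expander equation reads $\na^2 f=-\tfrac12 g$, and a Tashiro-type rigidity forces flatness, which is excluded. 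The locally irreducible symmetric case reduces to the negative-Einstein symmetric spaces $\mathbb{H}^4$ and $\mathbb{CH}^2$: the former violates WPIC while the latter is K\"ahler and is already covered by (ii) (cf. \cite[Theorem 7.61]{Besse}). This leaves the locally reducible case, where the universal cover splits as a Riemannian product of lower-dimensional gradient expanders (with the potential splitting accordingly); sorting the factors by dimension yields $N^3\times\R$, $\hat\Sigma_1^2\times\hat\Sigma_2^2$, or $\hat\Sigma^2\times\R^2$, and the explicit $2$D classification of Bernstein--Mettler \cite{Bernstein-M:15} and Ramos \cite{Ramos:18} pins down which $2$D factors survive the WPIC constraint on the product, giving the precise list in (iii).

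The hard part will be the second paragraph: verifying that the Brendle--Schoen Bony principle genuinely applies to the $2$-nonnegativity cone. Unlike the nonnegativity of a single tensor, the condition $A_1+A_2\ge 0$ has a non-smooth, eigenvalue-type boundary, so one must check that the degenerate eigendirection field is regular enough and that the reaction terms in Lemma \ref{lem:RFequations} are controlled in the directions tangent to the cone before concluding that saturation yields a \emph{parallel} --- not merely pointwise-degenerate --- distribution producing a true holonomy reduction. A secondary technical point is the reducible case, where one must compute the $2$-nonnegativity of $A$ and $C$ on a product metric in terms of the factor curvatures, and thereby show that WPIC forces the $3$D factor to satisfy $Rc>0$ and selects exactly the admissible $2$D expanders listed in (iii).
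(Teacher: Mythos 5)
Your proposal follows essentially the same route as the paper: use the canonical self-similar flow together with the Brendle--Schoen Bony-type strong maximum principle to make the degenerate set of the isotropic curvature invariant under parallel transport, conclude PIC when $\text{Hol}^{0}=\text{SO}(4)$, and then run Berger's classification plus the de Rham splitting and the Bernstein--Mettler/Ramos list for the reducible case. The ``hard part'' you single out is precisely \cite[Proposition 8]{Brendle-S:08} combined with \cite[Corollary 9.14]{Brendle:10b}, which the paper simply cites rather than reproves, and your remaining deviations (Tashiro's theorem in the Calabi--Yau subcase; the aside that $\mathbb{CH}^2$ would fall under (ii), when in fact it is excluded outright since WPIC forces $R\ge 0$) do not affect the conclusion.
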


\begin{proof} 
	Let $(M^4, g, f)$ be a 4-dimensional complete noncompact, non-flat, gradient expanding Ricci soliton with nonnegative isotropic curvature (WPIC).
	
	First of all, as is well-known, the assumption of WPIC implies that the scalar curvature $R\ge 0$. 

Next, we claim that if the holonomy group $\text{Hol}^{0} (M^4, g)$ is $\text{SO}(4)$, then $(M^4, g, f)$ has PIC. Indeed, consider the canonical Ricci flow $g(t)=(1+t)\Phi(t)^{\ast}(g)$ induced by the expanding soliton $(M^n,g,f)$ with $g(0)=g$ for $t\in [0,1]$. As in the proof of Theorem {\ref{thm:WPIC}}, by \cite[Proposition 8]{Brendle-S:08}, for any fixed time $t \in (0,1)$, the set of all orthonormal four-frames $\{e_1,e_2,e_3,e_4\}$ on $(M^4,g(t))$ satisfy
	\begin{equation} \label{eq:expanderWPIC=0}
		R_{1313}+R_{1414}+R_{2323}+R_{2424}-2R_{1234}=0
	\end{equation}
	is invariant under parallel transport with respect to the metric $g(t)$, which implies that all orthonormal four-frames $\{e_1,e_2,e_3,e_4\}$ on $(M^n,g,f)$ satisfying (\ref{eq:expanderWPIC=0}) is also invariant under parallel translation with respect to $g$. Thus, by \cite[Corollary 9.14]{Brendle:10b}, if the holonomy group $\text{Hol}^{0} (M^4, g)$ is $\text{SO}(4)$ then $(M^4, g, f)$ has PIC.
	
\smallskip
	 Now, we follow essentially the same argument as in the proof of Theorem \ref{thm:WPIC}. 
	
\smallskip
	{\bf Case 1:} $(M^4,g,f)$ is locally reducible.  In this case, since $(M^4, g, f)$ is non-flat, it must be a finite quotient of either $N^3\times \R$, where $N^3$ is a 3-dimensional expanding Ricci soliton, or $(\R^2,g_6(\nu)) \times \R^2$, or $(\R^2,g_6(\nu)) \times (\R^2,g_6(\nu))$, or $(\R^2,g_6(\nu)) \times (\R^2,g_7(\nu))$, or $(\R^2,g_6(\nu)) \times (\R_{\ast}^2,g_8(\nu))$, where $(\R^2,g_6(\nu))$, $(\R^2,g_7(\nu))$, and $(\R_{\ast}^2,g_8(\nu))$ are the 2-dimensional complete gradient expanding Ricci solitions in \cite[Theorem 1]{Bernstein-M:15}\footnote{We point out that the expanding Ricci solitons $(\R^2,g_7(\nu))$ and $(\R_{\ast}^2,g_8(\nu))$ in \cite[Theorem 1]{Bernstein-M:15} are negatively curved. Therefore, the product of these two solitons and  the product of one of these two solitons with $\R^2$ all have negative scalar curvature, hence they do not appear in the list.}. On the other hand, since $N^3\times \R$ has WPIC, $N^3$ has nonnegative Ricci curvature $Rc\geq 0$. Moreover, by Hamilton's strong maximum principle (see, e.g., \cite[Theorem 2.2.1]{Cao-Zhu:06}), $N^3$ either has $Rc> 0$ or it splits. 
	
	{\bf Case 2:} $(M^4,g,f)$ is locally irreducible and symmetric. If so, $(M^4,g)$ would necessarily be  Einstein with negative scalar curvature. However, by assumption, $(M^4 ,g, f)$ has nonnegative scalar curvature $R\geq 0$ due to WPIC. Thus, Case 2 doesn't occur. 
	
	{\bf Case 3:} $(M^4,g,f)$ is locally irreducible and non-symmetric.  Then, by Berger's holonomy classification, either $\text{Hol}^{0} (M^4, g)=\text{SO}(4)$, or $\text{Hol}^{0} (M^4, g)=\text{U}(2)$, or $\text{Hol}^{0} (M^4, g)=\text{SU}(2)$. If $\text{Hol}^{0} (M^4, g)=\text{SO}(4)$ then, from the above, we know that $(M^4,g,f)$ must have PIC. On the other hand, if $\text{Hol}^{0} (M^4, g)=\text{U}(2)$, then $(M^4,g, f)$ is a gradient expanding K\"ahler-Ricci soliton (with WPIC). Finally, if $\text{Hol}^{0} (M^4, g)=\text{SU}(2)$, then $(M^4,g,f)$ is Calabi-Yau, in particular Ricci-flat. Again, it follows from the third inequality in Lemma \ref{lem:WPIC} that $(M^4,g,f)$ must be flat; but this is ruled out by the non-flatness assumption.

	This finishes the proof of Theorem \ref{thm:expander} (Theorem \ref{thm:WPICexpander}).
\end{proof}

\bigskip
\section{The improved results for 4D shrinking Ricci solitons}

In our earlier paper \cite{Cao-Xie}, we investigated 4-dimensional gradient shrinking Ricci solitons with half PIC or half WPIC. In this section, motivated by the recent work of Li and Zhang \cite{Li-Zhang:22}, we observe the following results, improving \cite[Theorem 1.1--Theorem 1.3]{Cao-Xie} and \cite[Corollary 1.1 \& Corollary 1.2]{Cao-Xie}.

\begin{thm} \label{thm:quadraticofC1}
	Let $(M^4, g, f)$ be an orientable 4-dimensional complete gradient shrinking Ricci soliton. 
	
	\smallskip
	\begin{enumerate}
		\item[(a)] If $(M^4, g, f)$ has $A_2\geq 0$ or $C_2\geq 0$, then $A\geq 0$ or $C\geq 0$.
		
		\smallskip
		\item[(b)] If $(M^4, g, f)$ has $A_2>0$ or $C_2>0$, then $A > 0$ or $C > 0$. Moreover, if $M^4$ is noncompact then there exists some constant $K>0$ such that the smallest eigenvalue $A_1$ of $A$, or $C_1$ of $C$, satisfies the estimate 
		$$ A_1 \geq \frac{K}{f}, \quad {\mbox{or}} \quad  \ C_1 \geq \frac{K}{f}.$$
	\end{enumerate}
\end{thm}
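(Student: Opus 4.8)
The plan is to work directly on the soliton, converting the parabolic evolution of $A$ from Lemma \ref{lem:RFequations} into a drift-elliptic identity and then running a maximum principle on the smallest eigenvalue $A_1$ of $A$. Since reversing the orientation of $M^4$ interchanges $\wedge^+(M)$ with $\wedge^-(M)$, and hence $A$ with $C$, it suffices to treat the case $A_2\ge 0$ and prove $A\ge 0$; the case $C_2\ge 0$ then follows verbatim. After normalizing $\rho=\tfrac12$ and $R+|\na f|^2=f$, the self-similar structure of the canonical flow $g(t)=(1-t)\Phi^\ast(t)(g)$ turns the third equation of Lemma \ref{lem:RFequations} into the time-independent identity
$$\Delta_f A = A - 2\left(A^2 + 2A^{\sharp} + B\,{}^tB\right), \qquad \Delta_f:=\Delta-\la \na f,\na\,\cdot\,\ra,$$
valid pointwise on $(M^4,g,f)$ (the scalar analogue $\Delta_f R=R-2|Rc|^2$ serves as a check on the sign of the linear term).

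The main computation is the differential inequality for $A_1$. Let $\vphi_1\in\wedge^+(M_p)$ be a unit eigenvector for the least eigenvalue $A_1$ at a point $p$; extending $\vphi_1$ to a field parallel at $p$ and using that $A_1=\min_{|\vphi|=1}A(\vphi,\vphi)$ is concave in $A$, I would obtain, in the barrier sense,
$$\Delta_f A_1 \le (\Delta_f A)(\vphi_1,\vphi_1) = A_1 - 2A_1^2 - 4A_2A_3 - 2\,(B\,{}^tB)(\vphi_1,\vphi_1).$$
The decisive point is that the contribution of $A^{\sharp}$ along the lowest eigendirection is exactly $A_2A_3$, which is nonnegative as soon as $A_2\ge 0$ (for then $A_3\ge A_2\ge 0$); this uses only the middle eigenvalue, never the sum $A_1+A_2$. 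Since $B\,{}^tB\ge 0$ as well, both correction terms have a favorable sign, and
$$\Delta_f A_1 \le A_1 - 2A_1^2.$$

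For part (a) the maximum principle finishes the job. If $A_1<0$ at an interior minimum, then $\Delta_f A_1\ge 0$ there, while the right-hand side equals $A_1(1-2A_1)<0$, a contradiction; when $M^4$ is compact this already gives $A\ge 0$. In the noncompact case I would invoke the weighted maximum principle for $\Delta_f$ on shrinkers, using the potential as a barrier: Lemma \ref{lem:Hamilton} and the normalization yield $\Delta_f f = 2-f$, so $f$ is a proper exhaustion whose drift-Laplacian tends to $-\infty$, and comparing $A_1$ against $\epsilon f$ prevents a negative infimum from escaping to infinity. For part (b), the strict inequality $A_2>0$ forces $A_2A_3>0$; at any point where $A_1=0$ — necessarily a global minimum by part (a) — the inequality gives $0\le \Delta_f A_1\le -4A_2A_3<0$, so in fact $A_1>0$ everywhere and $A>0$. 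The quantitative bound $A_1\ge K/f$ I would get by a barrier comparison: since $\Delta_f(1/f)=1/f-2R/f^3$, the function $K/f$ is an approximate subsolution of the same operator for small $K>0$, and the strictly positive reaction term $4A_2A_3$ absorbs the error, so the weighted maximum principle applied to $A_1-K/f$ yields the estimate.

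I expect the genuinely delicate step to be the noncompact analysis: both the weighted maximum principle certifying $A\ge 0$ and, especially, the $K/f$ barrier require controlling $A_1$ and $A_2A_3$ at infinity on a complete shrinker, where a priori the curvature need not be bounded. By contrast, the algebraic heart — that $A_2\ge 0$ alone renders the $A^{\sharp}$ cross-term nonnegative, which is precisely what the earlier half-WPIC hypothesis $A_1+A_2\ge 0$ was used (more wastefully) to guarantee — is immediate, and is exactly what lets the assumption in Lemma \ref{lem:Cao-Xie} be relaxed from $A_1+A_2\ge 0$ to $A_2\ge 0$.
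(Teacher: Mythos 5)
Your algebraic core is exactly the paper's: Theorem \ref{thm:quadraticofC1} is proved there by observing that the arguments of \cite[Propositions 3.1 and 3.2]{Cao-Xie} never use the full half-WPIC hypothesis $A_1+A_2\ge 0$, only the fact that the reaction term $(A^{\sharp})(\vphi_1,\vphi_1)=A_2A_3$ along the lowest eigendirection is nonnegative once $A_2\ge 0$ (and positive once $A_2>0$). So your differential inequality $\Delta_f A_1\le A_1-2A_1^2-4A_2A_3-2(B\,{}^tB)(\vphi_1,\vphi_1)$, the orientation-reversal reduction to the $A$ case, and the strong-maximum-principle dichotomy in part (b) are all in order, and your closing remark is precisely the content of the paper's Remark 5.1.

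The gap is in the noncompact analysis of part (a), and it is genuine. To run your weighted maximum principle on $A_1+\epsilon f$ you must know that a negative infimum of $A_1+\epsilon f$ is attained, i.e.\ that $A_1^{-}=o(f)=o(d^2)$ at infinity; since $A_1\ge -|W^+|$ is all one has a priori, this amounts to a quadratic growth bound on $|W^+|$, which is not available on a general complete shrinker (no pointwise curvature bounds are assumed, and obtaining them is partly the point of this circle of papers). The paper sidesteps this entirely by working on the canonical ancient Ricci flow associated to the shrinker and invoking Chen's Lemma (Lemma \ref{Chen's lem}): the inequality $(\pa_t-\Delta)A_1\ge 2A_1^2$ in the barrier sense forces $A_1\ge 0$ with no growth hypothesis whatsoever, because Chen's space-time localization exploits ancientness together with the quadratic nonlinearity. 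A similar issue affects your $K/f$ barrier in part (b): the error term $2KR/f^{3}$ must be absorbed by $4A_2A_3$, but only $A_2>0$ is known, with no quantitative lower bound, so ``the reaction term absorbs the error'' is an assertion rather than an argument; this quantitative step is exactly what \cite[Proposition 3.2]{Cao-Xie} supplies, and the paper cites it rather than reproving it. If you replace your weighted-maximum-principle step by Chen's Lemma on the ancient solution and quote (or carefully redo) the $K/f$ estimate, your proof becomes the paper's.
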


The proof of Theorem \ref{thm:quadraticofC1} follows from \cite[Proposition 3.2]{Cao-Xie}
and the following proposition, which generalizes \cite[Proposition 3.1]{Cao-Xie}.

\begin{prop} \label{pro:positiveC1}
	Let $(M^4, g (t))$ be an orientable 4-dimensional complete ancient solution to the Ricci flow.  
	
	\smallskip
	\begin{enumerate}
		\item[(a)] If $g(t)$ has $A_2\geq 0$ or $C_2\geq 0$ on $M^4$, then $A\ge 0$ or $C\geq 0$ on $M^4$.
		
		\smallskip
		\item[(b)] If $g(t)$ has $A_2>0$ or $C_2>0$ on $M^4$, then $A > 0$ or $C > 0$ on $M^4$.  
		
	\end{enumerate}
	In particular, (a) and (b) hold for complete gradient shrinking or steady Ricci solitons (including Einstein 4-manifolds with positive or zero scalar curvature as special cases). 
\end{prop}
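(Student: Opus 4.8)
The plan is to peel off the global dichotomy and reduce everything to a single preserved-curvature statement. I would first assume that $A_2\geq 0$ on all of $M^4$; the case $C_2\geq 0$ is identical after reversing the orientation, which interchanges $\wedge^+(M)$ and $\wedge^-(M)$ and hence $A$ and $C$ (this is where orientability of $M^4$ enters). The crucial—and essentially free—observation is that $A_2\geq 0$ already forces $A_3\geq A_2\geq 0$, so the product $A_2A_3$ of the two top eigenvalues of $A$ is nonnegative. This is precisely the input needed to control the reaction term of the lowest eigenvalue, and it is genuinely weaker than the half-WPIC hypothesis $A_1+A_2\geq 0$ of Lemma \ref{lem:Cao-Xie}, since $A_1+A_2\geq 0$ implies $2A_2\geq A_1+A_2\geq 0$. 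Thus the entire generalization over Lemma \ref{lem:Cao-Xie} reduces to this algebraic remark.

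Next I would run a barrier/maximum-principle argument on the lowest eigenvalue. Let $A_1\le A_2\le A_3$ be the eigenvalues of $A(t)$ along the flow, and regard $A_1$ as a Lipschitz function on $M^4\times(-\infty,0]$. Using the evolution equation $\pa_t A=\Delta A+2(A^2+2A^{\sharp}+B{}^tB)$ from Lemma \ref{lem:RFequations}, the standard first-variation computation for the lowest eigenvalue (Hamilton's trick) gives, in the barrier sense,
\[
(\pa_t-\Delta)A_1\ \geq\ 2\bigl(A_1^2+2A_2A_3+|{}^tB\,v_1|^2\bigr)\ \geq\ 2A_1^2,
\]
where $v_1$ is a unit lowest eigenvector of $A$, the identity $\langle A^{\sharp}v_1,v_1\rangle=A_2A_3$ is used, and the last inequality is exactly $A_2A_3\geq 0$ together with $B{}^tB\geq 0$. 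Chen's Lemma (Lemma \ref{Chen's lem}) with $\delta=2$ then yields $A_1\geq 0$ on $M^4\times(-\infty,0]$, i.e. $A\geq 0$. This proves part (a).

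For part (b), assume $A_2>0$ on $M^4$. Part (a) already gives $A\geq 0$, so $A_3\ge A_2>0$ and the displayed inequality sharpens to $(\pa_t-\Delta)A_1\geq 2A_1^2+4A_2A_3$ with $4A_2A_3>0$. If $A_1$ vanished at some $(x_0,t_0)$, it would attain there its global minimum $0$ over $M^4\times(-\infty,t_0]$, and the strong maximum principle would force $A_1\equiv 0$ on that region—contradicting the strictly positive lower bound $(\pa_t-\Delta)A_1\geq 4A_2A_3>0$. Hence $A_1>0$, i.e. $A>0$, proving part (b). Finally, the ``in particular'' clause is immediate: the canonical Ricci flow $g(t)=(1-2\rho t)\Phi^{\ast}(t)(g)$ of a complete gradient shrinking ($\rho>0$) or steady ($\rho=0$) soliton—and of an Einstein $4$-manifold with $R\geq 0$ as a special case—is a complete ancient solution, so (a) and (b) apply verbatim.

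I expect the only real technical care to lie in the barrier step: since $A_1$ need not be smooth where eigenvalues collide, one must justify the displayed differential inequality for the Lipschitz function $A_1$ (via the support-function/Hamilton's-trick argument) and confirm that both Chen's Lemma and the strong maximum principle apply in the Lipschitz, barrier setting. The algebraic facts behind the reaction term—$\langle A^{\sharp}v_1,v_1\rangle=A_2A_3$ and $B{}^tB\geq 0$—are routine, so the mathematical heart of the proof is the single observation that $A_2\geq 0$ (rather than $A_1+A_2\geq 0$) already delivers $A_2A_3\geq 0$.
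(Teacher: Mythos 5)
Your proof is correct and follows essentially the same route as the paper, which simply observes that the argument of \cite[Proposition 3.1]{Cao-Xie} --- the lowest-eigenvalue evolution inequality $(\pa_t-\Delta)A_1\geq 2(A_1^2+2A_2A_3+|{}^tB\,v_1|^2)$ combined with Chen's Lemma --- only ever uses $A_2A_3\geq 0$, hence only $A_2\geq 0$ rather than $A_1+A_2\geq 0$. Your identification of that single algebraic point as the entire content of the generalization is exactly the remark the authors make.
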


\begin{rmk} The proof of \cite[Proposition 3.1(a)]{Cao-Xie} depends only on $A_2\ge 0$ or $C_2\ge 0$. Similarly,  the proof of \cite[Proposition 3.1(b)]{Cao-Xie} depends only on $A_2>0$ or $C_2>0$. Thus, the proof of Proposition \ref{pro:positiveC1} is identical to that of \cite[Proposition 3.1]{Cao-Xie}.
\end{rmk}

As a consequence of Theorem \ref{thm:quadraticofC1}, 
we obtain the following result in the gradient shrinking K\"ahler-Ricci soliton case because the proof of \cite[Corollary 1.1]{Cao-Xie} only relies on the fact that the shrinking K\"ahler-Ricci soliton has $C> 0$ or $C\ge 0$. 

\begin{cor} \label{thm:kahler}
	Let $(M^4,g,f)$ be a (canonically oriented) complete gradient shrinking K\"ahler-Ricci soliton of complex dimension two.
	
	\begin{enumerate}
		\item[(i)] If $(M^4,g,f)$ has $C_2>0$ on $M^4$, then it is, up to automorphisms, the complex projective space $\CP^2$. 
		
		\smallskip	
		\item[(ii)] If $(M^4,g,f)$ has $C_2\geq 0$ on $M^4$, then it is, up to automorphisms, one of the following: the complex projective space $\CP^2$, the product $\CP^1 \times \CP^1$, the cylinder $\CP^1 \times \C$, or the Gaussian soliton on $\C^2$.
		
	\end{enumerate}
\end{cor}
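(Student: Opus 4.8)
The plan is to deduce the corollary directly from Theorem \ref{thm:quadraticofC1} and \cite[Corollary 1.1]{Cao-Xie}. As the text observes, the classification in \cite[Corollary 1.1]{Cao-Xie} is established using only the hypothesis $C>0$ (resp. $C\ge 0$) on a complete gradient shrinking K\"ahler-Ricci soliton of complex dimension two; hence the entire task is to upgrade the weaker spectral assumption on the single middle eigenvalue $C_2$ to full positivity (resp. nonnegativity) of the operator $C=W^-+\frac{R}{12}I$, after which the list of model solitons is quoted verbatim.

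Before doing so, I would record why the hypothesis is naturally phrased on $C$ rather than $A$. For a K\"ahler surface carrying its canonical orientation, the K\"ahler form is self-dual and $W^+$ preserves the decomposition of $\wedge^+(M)$ into the line it spans and the orthogonal complement; a standard computation then gives that $A=W^++\frac{R}{12}I$ has eigenvalues $\{0,0,\tfrac{R}{4}\}$. Since a complete gradient shrinking soliton has $R\ge 0$ by Lemma \ref{lem:Chen}, this forces $A_2\equiv 0$ and $A\ge 0$ automatically (consistent with Theorem \ref{thm: compact}), so that all of the nontrivial curvature information in the problem is carried by $C$.

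The main step is then the application of Theorem \ref{thm:quadraticofC1}, read component-wise as explained in the Remark following Proposition \ref{pro:positiveC1}: the argument producing nonnegativity (resp. positivity) of $C$ relies solely on the assumption $C_2\ge 0$ (resp. $C_2>0$). Thus in part (ii) the hypothesis $C_2\ge 0$ yields $C\ge 0$, while in part (i) the hypothesis $C_2>0$ yields $C>0$ (together with the eigenvalue lower bound $C_1\ge K/f$ when $M^4$ is noncompact). Feeding $C\ge 0$ (resp. $C>0$) into \cite[Corollary 1.1]{Cao-Xie} then gives, up to automorphisms, one of $\CP^2$, $\CP^1\times\CP^1$, $\CP^1\times\C$, or the Gaussian soliton on $\C^2$ in the semipositive case, and only $\CP^2$ in the strictly positive case.

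The one point demanding care, and the main obstacle in the reduction, is precisely this component-wise reading. The literal conclusion of Theorem \ref{thm:quadraticofC1} is ``$A\ge 0$ or $C\ge 0$,'' and for a K\"ahler surface the alternative $A\ge 0$ holds vacuously by the normal form above; so I must verify that the underlying maximum-principle argument (a Chen-type application of Lemma \ref{Chen's lem} to a quantity built from the spectrum of $C$ alone) actually delivers the nonnegativity of $C$ itself, rather than the useless alternative. Granting this, which is exactly what the proof of \cite[Proposition 3.1]{Cao-Xie} supplies, the remainder is a direct citation of Theorem \ref{thm:quadraticofC1} and \cite[Corollary 1.1]{Cao-Xie}.
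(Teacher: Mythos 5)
Your proposal is correct and follows essentially the same route as the paper: invoke Theorem \ref{thm:quadraticofC1} (equivalently Proposition \ref{pro:positiveC1}), read component-wise so that $C_2\ge 0$ (resp. $C_2>0$) genuinely yields $C\ge 0$ (resp. $C>0$) rather than the vacuous alternative $A\ge 0$, and then quote \cite[Corollary 1.1]{Cao-Xie}, whose proof uses only the sign of $C$. Your remarks on the eigenvalues $\{0,0,\tfrac R4\}$ of $A$ for the canonical orientation and on the need for the component-wise reading are exactly the content of the paper's accompanying remarks, so nothing is missing.
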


\begin{rmk}
	In Corollary \ref{thm:kahler}, $M^4$ is canonically oriented by the complex structure so the matrix $A$ has $A_1=A_2=0$ on $M^4$;  see, e.g.,  (2.14) in \cite{Cao-Xie}.
\end{rmk}

\medskip
By Theorem \ref{thm:quadraticofC1} and the fact that the strong maximum principle argument in \cite[Theorem 1.2]{Cao-Xie} depends only on $A> 0$ or $A\ge 0$,  we have

\begin{thm} \label{thm:dichotomy}
	Let $(M^4, g, f)$ be an oriented 4-dimensional complete gradient shrinking Ricci soliton with $A_2\geq 0$ on $M^4$. Then, $(M^4, g, f)$ either has $A>0$ on $M^4$, or is isometric to the Gaussian soliton $\R^4$ or a $\Z_2$ quotient of   $\rS^2 \times \rS^2$ or $\rS^2 \times \R^2$, or is a gradient shrinking K\"ahler-Ricci soliton. 
\end{thm}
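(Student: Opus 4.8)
The plan is to upgrade the one-sided eigenvalue hypothesis to pointwise positivity of the entire self-dual block, then to quote the classification already established for half WPIC shrinkers, and finally to isolate the generic alternative by means of the strict version of Theorem~\ref{thm:quadraticofC1}. First I would apply Theorem~\ref{thm:quadraticofC1}(a): the assumption $A_2\geq 0$ on $M^4$ immediately yields $A\geq 0$ on all of $M^4$. In particular $A_1+A_2\geq 0$, so $(M^4,g,f)$ is a half WPIC gradient shrinker, and—crucially—it satisfies the stronger pointwise inequality $A\geq 0$ rather than merely the half WPIC condition.

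Next I would run the strong maximum principle argument from the proof of \cite[Theorem~1.2]{Cao-Xie}, which, as recorded in the remark preceding the statement, uses only $A\geq 0$ and not the weaker $A_1+A_2\geq 0$. On the canonical Ricci flow $g(t)$ associated with the shrinker, Lemma~\ref{lem:RFequations} gives the evolution
\[
\pa_t A=\Delta A+2\bigl(A^2+2A^{\sharp}+B{}^tB\bigr).
\]
Since $A\geq 0$ forces $A^2\geq 0$, $A^{\sharp}\geq 0$ (the adjugate of a nonnegative $3\times 3$ matrix is nonnegative), and $B{}^tB\geq 0$, the reaction term is nonnegative, $A\geq 0$ is preserved, and Hamilton's strong maximum principle shows that $\ker A$ has constant rank and is a parallel subbundle of $\wedge^+(M)$; by the self-similar structure $g(t)=(1-2\rho t)\Phi(t)^{\ast}(g)$ the same conclusions pass to $g$ itself. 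This reproduces the classification of \cite[Theorem~1.2]{Cao-Xie}: up to the indicated $\Z_2$ quotients, $(M^4,g,f)$ is either half PIC, or a gradient shrinking K\"ahler-Ricci soliton, or isometric to the Gaussian soliton $\R^4$, or to $\rS^2\times\rS^2$, or to $\rS^2\times\R^2$.

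Finally I would refine the half PIC alternative. Because $A\geq 0$ already holds, one has $A_1\geq 0$ and $A_1\le A_2$, so the half PIC condition $A_1+A_2>0$ is equivalent to $A_2>0$ on $M^4$; Theorem~\ref{thm:quadraticofC1}(b) then promotes $A_2>0$ to $A>0$. This is precisely the stated first alternative ``$A_2>0$ (hence $A>0$)'', while the four remaining possibilities are exactly those listed. Organized by the rank produced above: rank $3$ is the case $A>0$ (equivalently $A_2>0$); rank $1$ gives $A_1=A_2=0$ and $A_3=\tr A=R/4>0$, whose parallel top eigenline is a parallel self-dual $2$-form defining a K\"ahler structure, the locally reducible representatives being the quotients of $\rS^2\times\rS^2$ and $\rS^2\times\R^2$ (the factors being $2$-dimensional shrinkers, i.e.\ round $\rS^2$ or flat $\R^2$); and rank $0$ forces $R=4\tr A=0$, hence the flat Gaussian soliton $\R^4$.

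The step I expect to be the main obstacle is the rank bookkeeping hidden inside the strong maximum principle, specifically the exclusion of the intermediate case $\operatorname{rank}A\equiv 2$. This is exactly where Theorem~\ref{thm:quadraticofC1}(b) becomes decisive: rank two would give $A_1\equiv 0<A_2$, hence $A_2>0$ and therefore $A>0$ by Theorem~\ref{thm:quadraticofC1}(b), contradicting $A_1=0$. Thus only the ranks $3$, $1$, and $0$ survive, which is what makes the dichotomy clean; the rest of the argument is the routine matching of these three ranks to the listed geometries.
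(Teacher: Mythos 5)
Your proposal is correct and follows essentially the same route as the paper: the paper's own proof is precisely the observation that Theorem~\ref{thm:quadraticofC1}(a) upgrades $A_2\geq 0$ to $A\geq 0$, that the strong maximum principle argument of \cite[Theorem~1.2]{Cao-Xie} only uses $A\geq 0$, and that Theorem~\ref{thm:quadraticofC1}(b) converts the resulting half PIC alternative into $A_2>0$ hence $A>0$. Your additional rank bookkeeping (in particular the exclusion of rank $2$ via Theorem~\ref{thm:quadraticofC1}(b)) is a sound elaboration of details the paper leaves implicit.
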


\begin{rmk}\label{rmk:improving}
	Similarly, by Proposition \ref{pro:positiveC1} and the fact that the strong maximum principle argument in the proof of Theorem \ref{thm:WPIC} depends only on $A\geq 0$ and $C \geq 0$, we can improve Theorem \ref{thm:WPIC} so that the same result holds under the weaker assumption of $A_2\geq 0$ and $C_2\geq 0$. Similarly, Theorem \ref{thm:halfWPIC} is valid under the weaker assumption of $A_2\geq 0$. 
\end{rmk}

Consequently, as in \cite{Cao-Xie}, we obtain
\begin{cor} \label{cor:classification}
	Let $(M^4, g, f)$ be a 4-dimensional complete gradient shrinking Ricci soliton with $A_2\geq 0$ on $M^4$. Then, either
	\begin{enumerate}
		\item[(i)] $(M^4, g, f)$ has $A > 0$ on $M^4$, or 
		
		\smallskip
		\item[(ii)] $(M^4, g, f)$ is isometric to the Gaussian soliton $\R^4$ or a $\Z_2$ quotient of  $\rS^2 \times \rS^2$ or $\rS^2 \times \R^2$, or
		
		\smallskip
		\item[(iii)] $(M^4, g, f)$ is K\"ahler and, up to automorphisms, one of the following: a closed del Pezzo surface with its unique K\"ahler-Einstein or K\"ahler-Ricci soliton metric, the FIK \cite{FIK:03} soliton on the blowup of $\C^2$ at the origin,  or the BCCD  \cite {BCCD:22} soliton on the blowup of $\CP^1 \times \C$ at one point. 
		
	\end{enumerate}
\end{cor}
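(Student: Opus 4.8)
The plan is to deduce this classification directly from the dichotomy established in Theorem \ref{thm:dichotomy}, so that the only genuinely new input is the known classification of complete gradient shrinking K\"ahler-Ricci solitons of complex dimension two. This is exactly the strategy used for the corresponding corollary in \cite{Cao-Xie}, now carried through under the weaker hypothesis $A_2\geq 0$.

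First I would apply Theorem \ref{thm:dichotomy} to the (oriented) shrinker $(M^4, g, f)$ with $A_2\geq 0$. This splits the analysis into three possibilities: either $A_2>0$ on all of $M^4$, so that $A>0$; or $(M^4,g,f)$ is isometric to the Gaussian soliton $\R^4$ or a $\Z_2$-quotient of $\rS^2\times\rS^2$ or $\rS^2\times\R^2$; or $(M^4,g,f)$ is a gradient shrinking K\"ahler-Ricci soliton. The first alternative is precisely conclusion (i) and the second is precisely conclusion (ii), so no further work is needed in those branches.

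It then remains to treat the K\"ahler branch. Here I would first observe that, with the canonical orientation induced by the complex structure, one has $A_1=A_2=0$ (as recorded in the remark following Corollary \ref{thm:kahler}); hence the hypothesis $A_2\geq 0$ is automatically consistent with, and in fact forces agreement with, the canonical orientation. The soliton is thus a complete gradient shrinking K\"ahler-Ricci soliton on a complex surface, and I would invoke the classification of such solitons: the compact ones are exactly the del Pezzo surfaces equipped with their K\"ahler-Einstein or K\"ahler-Ricci soliton metrics, while the complete noncompact ones are exhausted by the FIK soliton \cite{FIK:03} on the blowup of $\C^2$ at the origin and the BCCD soliton \cite{BCCD:22} on the blowup of $\CP^1\times\C$ at one point. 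This yields conclusion (iii) and completes the argument.

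The main obstacle is not in the assembly, which is routine once Theorem \ref{thm:dichotomy} is in hand, but in the classification of complex two-dimensional gradient shrinking K\"ahler-Ricci solitons underlying case (iii). That classification draws on substantial external input: the resolution of the existence problem for K\"ahler-Ricci solitons on Fano surfaces together with the classical classification of del Pezzo surfaces in the compact case, and the more recent constructions and uniqueness results for the noncompact FIK and BCCD examples. Since all of these are available in the cited literature, the proof reduces to correctly matching the K\"ahler branch of Theorem \ref{thm:dichotomy} against the appropriate entries of this classification.
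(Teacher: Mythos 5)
Your proposal is correct and follows essentially the same route as the paper, which obtains this corollary precisely by combining Theorem \ref{thm:dichotomy} with the external classification of complete gradient shrinking K\"ahler--Ricci solitons in complex dimension two (del Pezzo surfaces in the compact case, FIK \cite{FIK:03} and BCCD \cite{BCCD:22} in the noncompact case). Your side remark that $A_2\geq 0$ ``forces agreement with the canonical orientation'' is unnecessary and not quite accurate (the classification of K\"ahler shrinkers is orientation-independent), but it plays no role in the argument and does not affect its validity.
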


As a special case of Corollary \ref{cor:classification}, we have the following characterization of complete gradient {\it K\"ahler-Ricci shrinkers} in complex dimension two.  

\begin{cor} \textup{\bf (Theorem \ref{thm: compact}.)} \label{cor: compact}
	Let $(M^4, g, f)$ be an oriented 4-dimensional complete gradient shrinking Ricci soliton with $H^1 (M, \Z_2)=0$. 
	Then, $(M^4, g, f )$ is a K\"ahler-Ricci shrinker if and only if $A_2\ge 0$ (but $A_2\not >0$) on $M^4$.
\end{cor}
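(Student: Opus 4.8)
The plan is to derive Corollary~\ref{cor: compact} as a special case of Corollary~\ref{cor:classification}, reading off from the classification list precisely which solitons are K\"ahler. First I would establish the \emph{only if} direction. Suppose $(M^4,g,f)$ is a K\"ahler-Ricci shrinker. Canonically orienting $M^4$ by the complex structure, the remark following Corollary~\ref{thm:kahler} tells us that the self-dual part has $A_1=A_2=0$ identically. In particular $A_2\ge 0$ but $A_2\not>0$, which gives one direction immediately; this direction requires no hypothesis on $H^1(M,\Z_2)$.

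For the \emph{if} direction I would assume $A_2\ge 0$ on $M^4$ but $A_2\not>0$, i.e.\ $A_2$ vanishes somewhere. Since $A_2\ge 0$, Corollary~\ref{cor:classification} applies and places $(M^4,g,f)$ in one of the three cases (i), (ii), (iii). The goal is to rule out (i) and the non-K\"ahler members of (ii), leaving only the K\"ahler possibilities, and then to upgrade ``K\"ahler shrinking soliton'' to ``K\"ahler-Ricci shrinker'' in the intended sense. Case (i) asserts $A>0$ on all of $M^4$, hence in particular $A_2>0$ everywhere, directly contradicting the standing assumption $A_2\not>0$; so (i) is excluded. The rigid examples in (ii) are the Gaussian soliton $\R^4$ and $\Z_2$-quotients of $\rS^2\times\rS^2$ and $\rS^2\times\R^2$. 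Here I would use the hypothesis $H^1(M,\Z_2)=0$: this topological condition is exactly what is needed to exclude the nontrivial $\Z_2$-quotients (a genuine quotient would produce torsion in $H^1(M,\Z_2)$), and to ensure that any such product soliton appearing is in fact the K\"ahler model rather than a non-orientable or otherwise non-K\"ahler quotient. The Gaussian soliton $\R^4$ carries a flat K\"ahler structure, while $\rS^2\times\rS^2$ and $\rS^2\times\R^2$ are themselves K\"ahler (they coincide with $\CP^1\times\CP^1$ and $\CP^1\times\C$ from Corollary~\ref{thm:kahler}(ii)); so the surviving members of (ii) are K\"ahler and are absorbed into the K\"ahler conclusion.

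I would then observe that case (iii) consists entirely of K\"ahler solitons by construction. Combining the three cases, the constraint $A_2\not>0$ forces $(M^4,g,f)$ out of the strictly-positive regime and into a K\"ahler example, which is the desired conclusion. The technical heart is the interplay between the sign condition on $A_2$ and the K\"ahler condition: on a K\"ahler surface with the canonical orientation the operator $W^+$ has the distinguished eigenvalue structure forcing $A_1=A_2=0$, so that $A_2\ge 0$ with $A_2\not>0$ is not merely compatible with but characteristic of the K\"ahler case. I expect the main obstacle to be the careful bookkeeping in the \emph{if} direction: one must verify that the condition $A_2\not>0$ combined with $H^1(M,\Z_2)=0$ eliminates every non-K\"ahler possibility on the Corollary~\ref{cor:classification} list without gaps, and in particular that the role of $H^1(M,\Z_2)=0$ in ruling out the $\Z_2$-quotients in (ii) is used correctly so that the remaining product examples are genuinely the K\"ahler models. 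Once the list is pinned down, the equivalence follows, and this completes the proof of Corollary~\ref{cor: compact} (Theorem~\ref{thm: compact}).
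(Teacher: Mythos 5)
Your proposal is correct and follows essentially the same route as the paper, which obtains this statement directly as a special case of Corollary \ref{cor:classification}: the hypothesis $A_2\not>0$ excludes case (i), $H^1(M,\Z_2)=0$ excludes the nontrivial $\Z_2$-quotients in case (ii) so that the surviving examples ($\R^4$, $\rS^2\times\rS^2$, $\rS^2\times\R^2$) are K\"ahler, and case (iii) is K\"ahler by construction, while the converse follows from $A_1=A_2=0$ for the canonical orientation of a K\"ahler surface. Your writeup simply makes explicit the bookkeeping that the paper leaves implicit.
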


\begin{rmk}  
	For oriented {\bf positive Einstein} $4$-manifold $(M^4, g)$, recently Li and Zhang \cite{Li-Zhang:22} showed that if $H^1 (M, \Z_2)=0$ and $(M^4, g)$ is not isometric to a round 4-sphere $\rS^4$, then $(M^4, g)$ is K\"ahler-Einstein if and only if $A_2\ge 0$ on $M^4$; furthermore, if $A_2>0$ and $M^4$ is simply connected then  $(M^4, g)$ is isometric to $\rS^4$ with the round metric.  
	The proof in \cite{Li-Zhang:22} is based on the Weitzenb\"ock formula from Micallef-Wang \cite{MW:93} for $W^+$ with $\delta W^+=0$. Note that the combination of our Proposition \ref{pro:positiveC1}(b) and \cite[Theorem 4.2]{MW:93} also gives an alternative proof. 
\end{rmk}

\medskip
Finally, by Proposition \ref{pro:positiveC1}(b), \cite[Theorem 1.3]{Cao-Xie} is also valid under the weaker assumption of $A_2>0$ or $C_2>0$. 

\begin{thm} \label{thm:3}
	Let $(M^4,g,f)$ be an orientable 4-dimensional complete gradient Ricci shrinker such that its Ricci tensor has an eigenvalue with multiplicity 3. If $(M^4,g,f)$  has $A_2>0$ or $C_2>0$, then it is either isometric to a round $\rS^4$, or $\CP^2$ with the Fubini-Study metric, or a finite quotient of round cylinder $\rS^3 \times \R$.
\end{thm}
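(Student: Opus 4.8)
The plan is to isolate the one genuinely new ingredient, Proposition \ref{pro:positiveC1}(b), and then observe that the rest of the classification is exactly that of \cite[Theorem 1.3]{Cao-Xie}. Since the canonical Ricci flow of a shrinker is a complete ancient solution, Proposition \ref{pro:positiveC1}(b) upgrades the hypothesis ``$A_2>0$ or $C_2>0$'' to ``$A>0$ or $C>0$'' on all of $M^4$; reversing the orientation if necessary (which interchanges $A$ and $C$ while preserving both the multiplicity hypothesis on $Rc$ and the list of models), I may assume $A>0$. The problem thus reduces to classifying $4$D complete gradient shrinkers with $A>0$ whose Ricci tensor has an eigenvalue of multiplicity at least three.

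First I would translate the eigenvalue hypothesis into the block picture (\ref{eq:CODecomposition}). By Lemma \ref{lem:RcandtracelessRc} the eigenvalues of $\mathring{Rc}$ are prescribed by the singular values $0\le B_1\le B_2\le B_3$ of $B$, and a direct check shows that $Rc$ has an eigenvalue of multiplicity at least three exactly when $B_1=B_2=B_3$. This separates an Einstein case $B_1=B_2=B_3=0$ (with $R>0$, the flat Gaussian $\R^4$ being excluded by $A>0$, so that $f$ is constant and $M^4$ is compact by Myers) from a non-Einstein case $B_1=B_2=B_3>0$.

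Next I would dispatch the product and K\"ahler alternatives with the classification already in hand. Corollary \ref{cor:classification} (applicable since $A>0$ forces $A_2\ge 0$) places $(M^4,g,f)$ in one of three branches: (i) $A>0$; (ii) a finite quotient of $\R^4$, $\rS^2\times\rS^2$, or $\rS^2\times\R^2$; or (iii) a K\"ahler shrinker of one of the listed types. Branch (ii) is vacuous here, since $\R^4$ and $\rS^2\times\rS^2$ have $A_2=C_2=0$ (contradicting the strict hypothesis) while $\rS^2\times\R^2$ has $Rc$ of multiplicities $(2,2)$ (contradicting the eigenvalue hypothesis). In branch (iii) the complex orientation forces the self-dual block to have two vanishing eigenvalues; since I have arranged $A>0$, the working orientation must be the reversed one, in which $A$ is the anti-self-dual block of the complex structure and $A>0$ reads as $C_2>0$ for the K\"ahler structure. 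Corollary \ref{thm:kahler}(i) then identifies $(M^4,g,f)$ as $\CP^2$ with the Fubini--Study metric, an Einstein (hence multiplicity-four) model.

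What remains is branch (i), $A>0$, and this is where I expect the real difficulty. If $(M^4,g,f)$ is locally reducible it is a product, and the multiplicity and strictness constraints leave only the cylinder $\rS^3\times\R$ (indeed $\rS^2\times\R^2$ has Ricci multiplicities $(2,2)$, and $\R^4,\rS^2\times\rS^2$ have $A_2=0$). The genuinely hard case, and the heart of \cite[Theorem 1.3]{Cao-Xie}, is when $(M^4,g,f)$ is locally irreducible and non-K\"ahler. Here one must run a rigidity argument: using the shrinker equation $Rc+\na^2 f=\tfrac12 g$, the contracted second Bianchi identity, and the strong-maximum-principle machinery behind Theorem \ref{prop:4Dancient}, one shows that the pointwise rigid form of $\mathring{Rc}$ (three equal singular values) forces the Ricci eigenvalues to be constant; local irreducibility then makes $(M^4,g,f)$ Einstein, i.e. $B=0$, and a positive, non-K\"ahler Einstein $4$-manifold with $A>0$ must be the round $\rS^4$. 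Promoting the pointwise structure of $\mathring{Rc}$ to this global splitting is the delicate step; once Proposition \ref{pro:positiveC1}(b) has supplied half-positivity from the weaker two-positivity hypothesis, everything else is bookkeeping, and the final list is $\rS^4$, $\CP^2$, and finite quotients of $\rS^3\times\R$.
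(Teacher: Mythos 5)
Your proposal is correct and follows essentially the paper's own route: the paper's entire proof of this theorem is the single observation that Proposition \ref{pro:positiveC1}(b) upgrades the hypothesis $A_2>0$ or $C_2>0$ to $A>0$ or $C>0$, after which \cite[Theorem 1.3]{Cao-Xie} applies verbatim --- precisely your opening reduction. The rest of your write-up is a partial reconstruction of the proof of \cite[Theorem 1.3]{Cao-Xie} itself (which the paper simply cites as a black box), and since you explicitly attribute the hard locally-irreducible rigidity step to that reference rather than claiming to reprove it, this does not constitute a gap relative to the paper's argument.
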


\smallskip
\section{Further Discussions}

\subsection {2-nonnegative Ricci curvature and 4D Ricci flow with WPIC}
By using the WPIC condition (i.e., $A$ and $C$ are weakly 2-positive), the same computation as in the proof of our Theorem \ref{prop:4Dancient}(a) and Hamilton's maximum principle \cite[Theorem 4.2]{Ha:86} imply that the {\it 2-nonnegative} Ricci curvature property is preserved by 4D Ricci flow with WPIC.

\begin{prop} \label{prop:2Rc_preserved}
	Let $(M^4,g(t))$, $t\in [0, T)$, be a complete solution to the Ricci flow with nonnegative isotropic curvature and bounded curvature. 
If $Rc$ is 2-nonnegative at $t=0$ then $Rc$ is 2-nonnegative on $0\le t < T$.
\end{prop}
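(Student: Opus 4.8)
The plan is to reduce the matrix statement to a scalar differential inequality for a single Lipschitz quantity and then apply the maximum principle, in direct parallel with the proof of Theorem \ref{prop:4Dancient}(a). First I would translate the hypothesis: by Lemma \ref{lem:RcandtracelessRc} and (\ref{Rc-eigen}), the ordering $0\le B_1\le B_2\le B_3$ forces $\lambda_1\le\lambda_2\le\lambda_3\le\lambda_4$, so the two smallest eigenvalues of $Rc$ are $\Lambda_1,\Lambda_2$, and
$$ v := \Lambda_1 + \Lambda_2 = \frac{R}{2} - 2B_3. $$
Thus $Rc$ is $2$-nonnegative precisely when $v\ge 0$. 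Since $B_3$ is the largest singular value of $B$, the function $v$ is Lipschitz in $(x,t)$, and it suffices to show that $v\ge 0$ is preserved.

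Next I would derive a scalar evolution inequality for $v$ in the barrier sense. From Lemma \ref{lem:RFequations} we have $\pa_t R = \Delta R + 2|Rc|^2$ and $\pa_t B = \Delta B + 2(AB + BC + 2B^{\sharp})$. Writing $x,y$ for unit right/left singular vectors of $B$ at its top singular value (so $Bx = B_3\,y$ and ${}^{t}B\,y = B_3\,x$), the first-variation formula for singular values gives, in the barrier sense,
$$ \pa_t B_3 \le \Delta B_3 + 2\big( B_3\langle Ay,y\rangle + B_3\langle Cx,x\rangle + 2\epsilon B_1B_2\big), $$
where $\epsilon=\pm 1$ records the orientation of the singular-value decomposition and $2\epsilon B_1B_2$ is the contribution of the cofactor term $B^{\sharp}$ (which sends $v_3\mapsto \epsilon B_1B_2\,u_3$ on the top singular direction). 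Combining this with $\pa_t R$ and using $|Rc|^2 = |\mathring{Rc}|^2 + \tfrac14 R^2 = 4(B_1^2+B_2^2+B_3^2) + \tfrac14 R^2$, I obtain in the barrier sense
$$ (\pa_t - \Delta)v \ge 4(B_1^2+B_2^2+B_3^2) + \tfrac14 R^2 - 4B_3\langle Ay,y\rangle - 4B_3\langle Cx,x\rangle - 8\epsilon B_1B_2. $$

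The decisive step uses WPIC exactly as weak $2$-positivity of $A$ and $C$, which holds for all $t$ by hypothesis. Since $A_1+A_2\ge 0$ and $\tr A = R/4$, we have $\langle Ay,y\rangle \le A_3 = \tfrac{R}{4}-(A_1+A_2)\le \tfrac{R}{4}$, and likewise $\langle Cx,x\rangle \le \tfrac{R}{4}$; also $-8\epsilon B_1B_2 \ge -8B_1B_2$ because $B_1B_2\ge 0$. Substituting $R = 4B_3 + 2v$ and simplifying, all $B_3$-terms cancel and one is left with
$$ (\pa_t - \Delta)v \ge 4(B_1-B_2)^2 + v^2 \ge 0 $$
in the barrier sense. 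In particular the reaction term is nonnegative wherever $v=0$, which is exactly the sign needed for preservation; I would then conclude by the maximum principle, using the assumed curvature bound together with a spatial cutoff/barrier in the complete noncompact case, so that $v\ge 0$ at $t=0$ forces $v\ge 0$ on $0\le t<T$.

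The step I expect to be the main obstacle is the barrier-sense evolution of the largest singular value $B_3$. Because $B$ is not symmetric and its singular values may collide, one must justify the first-variation computation through smooth upper barriers and, in particular, correctly extract the cofactor contribution $2\epsilon B_1B_2$ coming from $B^{\sharp}$. Once that term is identified, the WPIC bounds $A_3,C_3\le R/4$ make the remaining algebra routine, and the clean inequality $(\pa_t-\Delta)v\ge v^2$ delivers the conclusion.
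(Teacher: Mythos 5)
Your proposal is correct and takes essentially the same route as the paper: the paper works with $u = R - 4B_3 = 2v$, derives the same reaction inequality from Lemma \ref{lem:RFequations} with the identical worst-case $-16B_1B_2$ cofactor term, uses WPIC in the equivalent form $R-4A_3 = 4(A_1+A_2)\ge 0$ and $R-4C_3=4(C_1+C_2)\ge 0$ (rather than your $A_3,C_3\le R/4$), and reaches $\frac{d}{dt}u \ge \frac{1}{2}u^2 + 8(B_2-B_1)^2 \ge 0$, concluding by Hamilton's maximum principle \cite[Theorem 4.2]{Ha:86}. The only cosmetic differences are your normalization by a factor of $2$ and your more explicit barrier-sense discussion of the top singular value of $B$.
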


\begin{proof}
By Lemma \ref{lem:RcandtracelessRc}, we know that 2-nonnegative Ricci curvature is equivalent to  \[R-4B_3 \geq0.\]
Moreover, as we have seen in the proof of Theorem \ref{prop:4Dancient}(a), 
\begin{equation*}
	\begin{split}
		\frac{d}{dt}\left( R-4B_3\right)  & \geq \frac{1}{2}\left( R-4B_3\right)^2 
+ 8(A_1+A_2 +C_1+C_2)B_3 \\
		&\geq \frac{1}{2}\left( R-4B_3\right)^2 \\
		&\geq 0,
	\end{split}
\end{equation*}
where, in the second inequality, we have used the assumption of WPIC, i.e., $A_1+A_2\geq 0$ and $C_1+C_2\geq 0$. Therefore, by Hamilton's maximum principle \cite[Theorem 4.2]{Ha:86}, the 2-nonnegative Ricci curvature condition is preserved under the 4-dimensional Ricci flow with WPIC. This finishes the proof of Proposition \ref{prop:2Rc_preserved}.
\end{proof}

\begin{rmk}
As we mentioned before, for $n\geq 5$, Li and Ni \cite{Li-Ni:20} showed that complete ancient solutions to the Ricci flow with WPIC must have {\bf  2-nonnegative} Ricci curvature; see \cite[Proposition 5.2]{Li-Ni:20}. By using essentially the same argument as in their proof, it follows that the 2-nonnegativity of $Rc$ is also preserved by $n$-dimensional $(n\geq 5$) Ricci flow with WPIC. However, their proof does not seem to extend to the $n=4$ case, since the key inequality in \cite[Lemma 4.1]{Li-Ni:20} holds only for $n\geq 5$. 
\end{rmk}

Next, we observe that the condition of 2-nonnegative Ricci curvature is preserved by the Ricci flow in dimension three. 
In this case, we can diagonalize the curvature operator $Rm$ with eigenvalues $m_1\leq m_2\leq m_3$. Then the Ricci tensor $Rc$ is diagonalized with eigenvalues $$\frac{1}{2}(m_1+m_2)\leq \frac{1}{2}(m_1+m_3)\leq \frac{1}{2}(m_2+m_3),$$ and the scalar curvature is given by $R=m_1+m_2+m_3$. Moreover, we have the following ODE system corresponding to the curvature evolution PDE (\cite{Ha:86}): 
\begin{equation} \label{eq:3dODE}
	\frac{d}{dt}m_1=m_1^2+m_2m_3,\quad \frac{d}{dt}m_2=m_2^2+m_1m_3,\quad \frac{d}{dt}m_3=m_3^2+m_1m_2.
\end{equation}

\begin{prop} \label{prop:3-D}
	Let $(M^3,g(t))$, $t\in [0, T)$, be a complete solution to the Ricci flow with bounded curvature. If $Rc$ is 2-nonnegative at $t=0$ then $Rc$ is 2-nonnegative on $0\le t < T$.
\end{prop}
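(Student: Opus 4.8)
The plan is to mirror the proof of Proposition \ref{prop:2Rc_preserved}, working directly with the eigenvalues $m_1\le m_2\le m_3$ of the curvature operator and the reaction ODE system (\ref{eq:3dODE}). First I would reduce the conclusion to a single scalar inequality. Since the two smallest eigenvalues of $Rc$ are $\frac12(m_1+m_2)$ and $\frac12(m_1+m_3)$, the $2$-nonnegativity of $Rc$ is equivalent to
\[
\psi:=2m_1+m_2+m_3=m_1+R\ge 0.
\]
Thus it suffices to show that the pointwise condition $\psi\ge 0$ is preserved along the flow.

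Next I would compute the evolution of $\psi$ under the ODE system (\ref{eq:3dODE}). Summing the three equations with weights $(2,1,1)$ gives
\[
\frac{d}{dt}\psi=2(m_1^2+m_2m_3)+(m_2^2+m_1m_3)+(m_3^2+m_1m_2)=2m_1^2+(m_2+m_3)^2+m_1(m_2+m_3).
\]
The key algebraic observation is that the right-hand side is a nonnegative quadratic form in $m_1$ and $s:=m_2+m_3$; completing the square,
\[
\frac{d}{dt}\psi=2\Bigl(m_1+\tfrac{m_2+m_3}{4}\Bigr)^2+\tfrac78(m_2+m_3)^2\ge 0.
\]
Hence $\frac{d}{dt}\psi\ge 0$ holds identically (in particular on the boundary $\{\psi=0\}$), so the reaction ODE can never drive $\psi$ below zero: the set $\{\psi\ge 0\}$ is invariant under the ODE flow.

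Finally, to pass from the ODE to the PDE $\pa_t Rm=\Delta Rm+(Rm^2+Rm^{\sharp})$ I would invoke Hamilton's maximum principle for systems (\cite[Theorem 4.2]{Ha:86}), exactly as in the proof of Proposition \ref{prop:2Rc_preserved}. For this I must record that the relevant set of curvature operators $\mathcal{C}=\{Rm:\ \lambda_{\min}(Rm)+R\ge 0\}$ is closed, invariant under parallel translation (it depends only on the eigenvalues of $Rm$), and \emph{convex} --- the latter because $\lambda_{\min}$ is a concave function of the symmetric operator $Rm$ while $R=\tr (Rm)$ is linear, so $\psi$ is concave and its superlevel set is convex. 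Having already shown that $\mathcal{C}$ is preserved by the ODE, Hamilton's theorem then yields that $\mathcal{C}$ is preserved by the Ricci flow on $[0,T)$, given completeness and the bounded-curvature hypothesis; this is precisely the assertion that $Rc$ remains $2$-nonnegative.

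The step I expect to require the most care is this last passage from the ODE to the PDE: although the scalar inequality $\frac{d}{dt}\psi\ge 0$ is clean, the quantity $\psi=\lambda_{\min}(Rm)+R$ is only Lipschitz (it fails to be smooth where $m_1=m_2$), so one cannot naively differentiate it along the flow. The convex-set formulation of Hamilton's maximum principle is exactly what circumvents this, which is why verifying the convexity and parallel-invariance of $\mathcal{C}$ --- rather than the ODE computation itself --- is the real content of the argument.
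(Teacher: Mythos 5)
Your proposal is correct and follows essentially the same route as the paper: reduce $2$-nonnegativity of $Rc$ to $\psi=2m_1+m_2+m_3\ge 0$, verify the reaction ODE respects this, and invoke Hamilton's maximum principle for systems. The only (harmless) difference is that you establish $\frac{d}{dt}\psi=2\bigl(m_1+\tfrac{m_2+m_3}{4}\bigr)^2+\tfrac{7}{8}(m_2+m_3)^2\ge 0$ everywhere, whereas the paper checks nonnegativity only on the boundary $\{\psi=0\}$, where the same expression reduces to $4m_1^2$; your explicit verification of convexity and parallel-invariance of the set $\mathcal{C}$ is left implicit in the paper.
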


\begin{proof}
	First of all, we note that 2-nonnegative Ricci curvature is equivalent to $2m_1+m_2+m_3 \geq 0$.
	
	Now, by (\ref{eq:3dODE}), when $2m_1+m_2+m_3 =0$, i.e., $m_3=-2m_1-m_2$, we have
	\begin{equation*}
		\begin{split}
			\frac{d}{dt}(2m_1+m_2+m_3) &= 2m_1^2 +2m_2m_3+m_2^2+m_1m_3+m_3^2+m_1m_2 \\
			&=2m_1^2 +2m_2(-2m_1-m_2)+m_2^2 \\
			&\quad +m_1(-2m_1-m_2)+(-2m_1-m_2)^2+m_1m_2 \\
			&=4m_1^2 \geq 0.
		\end{split}
	\end{equation*}	
	Therefore, by Hamilton's maximum principle \cite[Theorem 4.2]{Ha:86}, the 2-nonnegative Ricci curvature condition is preserved by the 3-dimensional Ricci flow. 
\end{proof}

\subsection{PIC vs. Uniformly PIC for 4D ancient solutions}

Under the stronger assumption of {\it uniformly} PIC, Cho-Li \cite{Cho-Li:23}  proved that any $4$-dimensional complete ancient solution $g(t)$ must have nonnegative curvature operator $Rm\ge 0$. Here, by uniformly PIC it means that $g(t)$ has PIC and satisfies the additional pointwise pinching condition
\begin{equation} \label{condition}
\max\{A_3, B_3, C_3\} \le \Lambda \min \{A_1+A_2, C_1+C_2\}
\end{equation}
for some constant $\Lambda>0$. Furthermore, they showed that such ancient solutions necessarily have bounded curvature, provided they are $\kappa$-noncollapsed. Combining these properties with the result \cite[Corollary 1.6]{BN:23} of Brendle and Naff,  it follows that any $4$-dimensional $\kappa$-noncollapsed, noncompact, complete ancient solution to the Ricci flow with uniformly PIC is isometric to either a family of shrinking cylinders (or their quotients) or the Bryant soliton. 

\begin{rmk}
However, each of the one-parameter family of $\Z_2\times O(3)$-invariant, yet non-rotationally symmetric,  4D steady gradient solitons constructed by Lai \cite{Lai1} has positive curvature operator $Rm>0$, hence PIC,  but is not uniformly PIC. Thus, Lai's examples show that the assumption of uniformly PIC is strictly stronger than that of PIC. In fact, if a complete steady gradient soliton $( M^4, g, f)$ has uniformly PIC then it is asymptotically cylindrical and has linear scalar curvature decay from above and below; see \cite{Brendle:14} for details. 
\end{rmk}

On the other hand, motivated by the result of B.-L. Chen \cite{ChenBL:09} that any $3$-dimensional
 complete ancient solution to the Ricci flow must have $Rm\geq 0$, it is also natural to ask the following

\medskip
\noindent {\bf Question 6.1\footnote{This question was raised by the first author at a colloquium talk given at Sun Yat-sen University (Guangzhou, China) in December, 2018.}.} Let $g(t)$ be a $4$-dimensional complete ancient solution to the Ricci flow on $M^4$ with PIC (or WPIC). Is it true that $g(t)$ necessarily has nonnegative curvature operator $Rm\ge 0$? 

\smallskip
We conclude our paper with the  following observation based on Theorem \ref{thm:4Dancient}.  

\begin{prop} \label{PIC vs. unif PIC}
Let $g(t)$, $-\infty <t\le 0$, be a $4$-dimensional complete ancient solution to the Ricci flow on $M^4$ with PIC. Suppose that 
\begin{equation} \label{weak unif}
A_3 \leq L (A_1+A_2) \qquad {\mbox{and}} \qquad C_3 \leq L (C_1+C_2), 
\end{equation}
or equivalently
\begin{equation} \label{weak unif}
R \leq 4(L+1) \ \!\max \{ A_1+A_2, C_1+C_2\}
\end{equation}
for some constant $L>0$. Then, $g(t)$ has uniformly PIC. 
\end{prop}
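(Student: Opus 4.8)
The plan is to verify the single remaining pinching inequality in the definition of uniformly PIC, namely $\max\{A_3, B_3, C_3\} \le \Lambda \min\{A_1+A_2, C_1+C_2\}$ for some $\Lambda>0$, since the PIC hypothesis already supplies the other half of that definition. The entire argument hinges on the trace identities $A_1+A_2+A_3 = C_1+C_2+C_3 = \tr A = \tr C = R/4$ together with exactly one nontrivial geometric input: the nonnegativity $Rc \ge 0$ furnished by Theorem \ref{thm:4Dancient}.

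First I would convert the hypothesis into a lower bound on the two denominators. Writing $A_1+A_2 = R/4 - A_3$, the inequality $A_3 \le L(A_1+A_2)$ becomes $A_3 \le \frac{L}{1+L}\cdot\frac{R}{4}$, hence $A_1+A_2 \ge \frac{1}{1+L}\cdot\frac{R}{4}$; the identical computation for $C$ gives $C_1+C_2 \ge \frac{1}{1+L}\cdot\frac{R}{4}$. Therefore $\min\{A_1+A_2, C_1+C_2\} \ge R/(4(1+L))$, equivalently $R/4 \le (1+L)\min\{A_1+A_2, C_1+C_2\}$. The quantities $A_3$ and $C_3$ are then immediate: since PIC forces $A_1+A_2 > 0$ and $C_1+C_2 > 0$, the trace identity gives $A_3 = R/4 - (A_1+A_2) < R/4$ and likewise $C_3 < R/4$, so both are bounded by $(1+L)\min\{A_1+A_2, C_1+C_2\}$.

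The only substantive step is the control of $B_3$, which is not algebraic and cannot be read off from the trace constraints alone. Here I would invoke Theorem \ref{thm:4Dancient}: an ancient solution with PIC (hence WPIC) has $Rc \ge 0$, and since $\tr Rc = R$, nonnegativity forces $|Rc|^2 \le R^2$. Combining this with the identity $|Rc|^2 = |\mathring{Rc}|^2 + \frac14 R^2 = 4|B|^2 + \frac14 R^2$ yields $4|B|^2 \le \frac34 R^2$, so that $B_3 \le |B| \le \frac{\sqrt3}{4}R$; using $R/4 \le (1+L)\min\{A_1+A_2, C_1+C_2\}$ from the first step, this becomes $B_3 \le \sqrt3(1+L)\min\{A_1+A_2, C_1+C_2\}$. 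Collecting the three estimates and taking $\Lambda = \sqrt3(1+L)$ gives $\max\{A_3, B_3, C_3\} \le \Lambda\min\{A_1+A_2, C_1+C_2\}$, which together with the assumed PIC is precisely uniform PIC. The hard part is thus the estimate on $B_3$: it depends essentially on the prior theorem that an ancient PIC flow has nonnegative Ricci curvature, which is exactly what ties the off-diagonal block $B$ to the scalar curvature and closes the argument.
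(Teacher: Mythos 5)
Your proof is correct and is precisely the argument the paper intends: the proposition is stated as an ``observation based on Theorem \ref{thm:4Dancient}'', and your route --- trace identities giving $A_3, C_3 \le R/4 \le (1+L)\min\{A_1+A_2, C_1+C_2\}$, plus $Rc\ge 0$ forcing $4|B|^2 \le \tfrac34 R^2$ and hence $B_3 \le \tfrac{\sqrt3}{4}R$ --- is exactly how that theorem closes the estimate on the off-diagonal block. The only discrepancy is that the paper's displayed ``equivalently'' line reads $\max\{A_1+A_2, C_1+C_2\}$ where the conjunction of the two hypotheses actually yields $\min$; your version with $\min$ is the correct one.
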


\bigskip
\noindent {\bf Acknowledgements.} We are grateful to Professor Richard Hamilton for helpful suggestions. We would also like to thank Dr. Pak-Yeung Chan, Dr. Florian Johne, Dr. Yi Lai,  and the anonymous referee for helpful comments. The first author's research was partially supported by a grant from the Simons Foundation.

\bigskip

\end{document}